\documentclass[11pt,a4paper, xcolor]{article}
\usepackage{amsfonts}

\usepackage{amssymb,amsthm, amsmath,graphicx,bm,ebezier, mathtools}

\usepackage{hyperref}
\usepackage{color,enumerate}
\usepackage{url,lineno}
\usepackage{arydshln}
\usepackage{lscape}

\usepackage[ruled,vlined]{algorithm2e}

\newtheorem{theorem}{Theorem}
\newtheorem{definition}[theorem]{Definition}
\newtheorem{proposition}[theorem]{Proposition}
\newtheorem{corollary}[theorem]{Corollary}
\newtheorem{lemma}[theorem]{Lemma}

\theoremstyle{remark}

\newtheorem{example}[theorem]{Example}
\newtheorem{remark}[theorem]{Remark}

\usepackage{tikz}
\newcommand{\Cross}{\mathbin{\tikz [x=1.45ex,y=1.45ex,line width=.1ex] \draw (0,0) -- (1,1) (0,1) -- (1,0);}}%
\def\lcm{\mathrm{lcm}}
\def\CH{\mbox{\tt ConvexHull}}
\def\VSet{\mbox{\tt VSet}}

\def\CaS{\mathcal{S}}

\def\N{\mathbb{N}}
\def\Z{\mathbb{Z}}
\def\R{\mathbb{R}}
\def\Q{\mathbb{Q}}

\def\T{\mathrm{T}}
\def\Pr{\mathrm{P}}
\def\Ba{\mathrm{B}}
\def\EH{\mathrm{EH}}

\def\P{\mathbf{P}}
\def\I{\mathbf{I}}

\title{A geometrical characterization of proportionally modular affine semigroups}
\author{
J. D. D\'{\i}az-Ram\'{\i}rez
\footnote{
Departamento de Matem\'aticas/INDESS (Instituto Universitario para el Desarrollo Social Sostenible), Universidad de C\'adiz,
E-11406 Jerez de la Frontera (C\'{a}diz, Spain).
E-mail: juandios.diaz@uca.es.}\\
J. I. Garc\'{\i}a-Garc\'{\i}a
\footnote{
Departamento de Matem\'aticas/INDESS (Instituto Universitario para el Desarrollo Social Sostenible),
Universidad de C\'adiz, E-11510 Puerto Real  (C\'{a}diz, Spain).
E-mail: ignacio.garcia@uca.es.}\\
A. S\'{a}nchez-R.-Navarro
\footnote{
Departamento de  Ingenier\'{\i}a Inform\'{a}tica/INDESS (Instituto Universitario para el Desarrollo Social Sostenible), Universidad de C\'adiz,
E-11406 Jerez de la Frontera (C\'{a}diz, Spain).
E-mail: alfredo.sanchez@uca.es}
\\
A. Vigneron-Tenorio
\footnote{
Departamento de Matem\'aticas/INDESS (Instituto Universitario para el Desarrollo Social Sostenible), Universidad de C\'adiz,
E-11406 Jerez de la Frontera (C\'{a}diz, Spain).
E-mail: alberto.vigneron@uca.es.}
}
\date{}

\textwidth=16.5cm \textheight=25cm  \oddsidemargin -0.5cm
\topmargin -1cm  
\setlength{\baselineskip}{1.3cm}

\begin{document}
\maketitle
\abstract{A proportionally modular affine semigroup is the set of nonnegative integer solutions of a modular Diophantine inequality $f_1x_1+\cdots +f_nx_n \mod b \le g_1x_1+\cdots +g_nx_n$ where $g_1,\dots,g_n,$ $f_1,\ldots ,f_n\in \Z$ and $b\in\N$. In this work, a geometrical characterization of these semigroups is given. Moreover, some algorithms to check  if a semigroup $S$ in $\N^n$, with $\N^n\setminus S$ a finite set, is a proportionally modular affine semigroup are provided by means of that geometrical approach.}

 \smallskip
 {\small \emph{Keywords:}  affine semigroup, modular Diophantine inequalities, numerical semigroup, proportionally modular numerical semigroup.}

 \smallskip
 {\small \emph{2010 Mathematics Subject Classification:} 20M14 (Primary), 68U05 (Secondary).}

\section*{Introduction}

An affine semigroup $S$ is a commutative subsemigroup of $\N^n$, that is, $S$ is a subset of $\N^n$ containing the origin, and such that $x+y=y+x\in S$ for all $x,y\in S$. The semigroups satisfying $\N^n\setminus S$ is a finite set are called generalized numerical semigroups or $\N^n$-semigroups (see \cite{GenSemNp} and \cite{Csemigroup}). If $n=1$, they are called numerical semigroups.

Proportionally modular numerical semigroups were introduced by Rosales et al. in \cite{RoGarGarUrb}. These numerical semigroups are the nonnegative integer solutions of Diophantine modular inequality $a x\mod b  \le cx,$ where $a,c\in \Z$ and $b\in \N$. Several research studies have been done about them from multiple points of view. For example, it has been proved their relations with the numerical semigroups generated by intervals, with Bezout's sequences, etc., and some different ways to determine if a numerical semigroup is a proportionally modular numerical semigroup have been given. A comprehensive compilation of these numerical semigroups it is shown in \cite{libro_rosales}.

The natural generalization of proportionally modular numerical semigroups to higher dimension is given in \cite{PropAffinSemig}: a proportionally modular affine semigroup is the set of nonnegative integer solutions of a modular Diophantine inequality $f_1x_1+\cdots +f_nx_n \mod b \le g_1x_1+\cdots +g_nx_n$ where $g_1,\dots,g_n,f_1,\ldots ,f_n\in \Z$ and $b\in\N$. In that paper, the authors determine some algorithms to obtain the minimal generating set of a semigroup from its modular Diophantine inequality. Besides, some properties related with its associated ring are studied.

The main goal of this work is to give algorithmic methods for checking if a semigroup is a proportionally modular affine semigroup. In order to obtain such algorithms, we provide a geometrical characterization of these semigroups. In particular, we prove that an affine semigroup is a proportionally modular semigroup if and only if it is the union of the natural points belonging to some translations of the polyhedron delimited by two hyperplanes (Theorem \ref{CharactPropAffMod}).

Based on Theorem \ref{CharactPropAffMod}, Algorithm \ref{algoritmo_todos_intervalos} and Algorithm \ref{algoritmo_NO_todos_intervalos} check if an $\N^n$-semigroup is a proportionally modular semigroup. These algorithms solve the problem for two different cases. The first one tests if $S$ is a proportionally modular semigroup when $S$ includes no elements in the canonical basis of $\R^n$. For this case, we prove that the proportionally modular semigroups are the semigroups obtained from a special kind of polytopes. In a way, this is equivalent to that happens for proportionally modular numerical semigroups.
The second algorithm can be applied when some element of the canonical basis belongs to $S$.

In any case, both algorithms solve the problem by means of finding real solutions to systems of polynomial inequalities constructed from some technical results (i.e. Corollary \ref{corolario_caso_bandas} and Theorem \ref{casoNnprisma}). These inequalities are the explicit conditions that have to be satisfied by the elements in $S$ and in $\N^n\setminus S$ so that a semigroup $S\subset \N^n$ is a proportionally modular semigroup. In fact, this work presents some algorithms to determine if a finite subset of $\N^n$ fulfils some specific geometrical configurations and arrangements. Several references about solving systems of polynomial inequalities can be found in \cite{Deloera}.

For proportionally modular numerical semigroups, we introduce the concepts of minimal and maximal intervals defining them. These intervals have an important role in the algorithms above mentioned. Furthermore, we provide an algorithm for computing the sets of these minimal and maximal intervals.

The results of this work are illustrated with several examples. To this aim,
we have used the library \texttt{PropModSemig.m} (\cite{prog_prop_mod_afines}) developed by the authors in Mathematica (\cite{mathematica}).

The content of this work is organized as follows. Section \ref{seccion_numericos} provides some basic definitions and results related to proportionally modular numerical semigroups, including algorithmic methods for computing the sets of minimal and maximal intervals defining them. Section \ref{seccion_notacion_Nsemig} gives several definitions and notations related to affine semigroups. Furthermore, most inequalities used in the main algorithms in the following sections are defined in this section. Section \ref{seccion_caracterizacion_geometrica} shows the geometrical characterization of proportionally modular affine semigroups. The algorithms for checking if a semigroup is proportionally modular affine semigroup are introduced in sections \ref{SectionCase1} and \ref{SectionCase2}. Section \ref{seccionTriangulos} studies the two dimensional case for a better understanding of the case 2 solved in Section \ref{SectionCase2}.

\section{Initial results on proportionally modular numerical semigroups}\label{seccion_numericos}

In this section we introduce some results and definitions about numerical semigroups those are useful for the understanding of this work.

Let $\R$, $\Q$ and $\N$ be the sets of real numbers, rational numbers, and nonnegative integers, respectively.
Denote by $\R_\geq$ and $\Q_\geq$ the set of nonnegative elements of $\R$ and $\Q$, and by $\N^*$ the set $\N\setminus\{0\}.$ We denote by $[n]$ the set $\{1,\ldots n\}$ for any $n\in \N$.

A numerical semigroup $S$ is called half-line semigroup if there exists $m\in \N$ such that $S=\N\cap [m,\infty).$ Given an interval $\I\subset \R_\ge$, denote by $\CaS(\I)$ the numerical semigroup $\bigcup _{i\in \N}i\I\cap \N.$ Half-line semigroups can be characterized by a property of the intervals defining them.

\begin{lemma}
$S$ is a non proper half-line semigroup if and only if there exists an interval $[p,q]$ such that $p>1,$ $S=\CaS([p,q])$ and $S=\CaS([p,q'])$ for all $q'\ge q.$
\end{lemma}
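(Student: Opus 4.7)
The plan is to treat the two implications separately, both by explicit construction.

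For the forward direction, suppose $S = \{0\} \cup (\N \cap [m,\infty))$ with $m \ge 2$ (this being the force of ``non proper''). I would take $p = m$ and $q = 2m-1$, so that $p > 1$. To verify $\CaS([m,2m-1]) = S$, note first the trivial inclusion $\CaS([m,2m-1]) \subseteq \{0\}\cup (\N \cap [m,\infty)) = S$, since for every $i\ge 1$ one has $i[m,2m-1]\subseteq [m,\infty)$. For the reverse inclusion, observe that the consecutive intervals $[im,i(2m-1)]$ and $[(i+1)m,(i+1)(2m-1)]$ satisfy
\[
(i+1)m - i(2m-1) \;=\; m - i(m-1) \;\le\; 1 \qquad \text{for every } i \ge 1,
\]
because $m\ge 2$ forces $i(m-1)\ge m-1$. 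Thus consecutive intervals either overlap or are separated by at most a unit gap with integer endpoints, so their union captures every integer in $[m,\infty)$. For any $q'\ge q=2m-1$, the inclusion $\CaS([m,2m-1])\subseteq \CaS([m,q'])\subseteq S$ forces equality.

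For the reverse direction, I assume such a pair $[p,q]$ with $p>1$ exists and aim to show $S = \{0\}\cup (\N\cap [\lceil p\rceil,\infty))$ with $\lceil p\rceil\ge 2$. Since $p>1$, the case $\lceil p\rceil\ge 2$ is automatic. The easy inclusion is $S = \CaS([p,q])\subseteq \{0\}\cup (\N\cap[p,\infty)) = \{0\}\cup(\N\cap[\lceil p\rceil,\infty))$, which holds because every $i[p,q]$ with $i\ge 1$ lies in $[p,\infty)$. For the reverse inclusion, I would exploit the hypothesis that \emph{all} $q'\ge q$ produce the same semigroup: given any integer $n\ge \lceil p\rceil$, pick $q' := \max(q,n)$ so that $n\in[p,q']\cap \N \subseteq \CaS([p,q']) = S$. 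This shows $\{0\}\cup (\N\cap[\lceil p\rceil,\infty))\subseteq S$, so equality holds and $S$ is a non proper half-line semigroup with parameter $m=\lceil p\rceil$.

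The main subtlety, rather than a real obstacle, is the case where $p$ is not an integer: one has to be careful that the half-line threshold obtained is $\lceil p\rceil$ and not $p$ itself, and that the condition $p>1$ indeed forces $\lceil p\rceil\ge 2$ to rule out $S=\N$. The combinatorial computation $(i+1)m - i(2m-1) = m - i(m-1)$ in the forward direction is the numerical heart of the argument; the reverse direction is essentially a monotonicity observation combined with the hypothesis of eventual stabilisation in $q'$.
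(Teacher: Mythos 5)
Your proof is correct and follows essentially the same route as the paper: the forward direction uses the interval $[m,2m-1]$ (with the overlap/unit-gap computation the paper leaves implicit), and the reverse direction identifies $\N\setminus S$ as $\{1,\dots,\lceil p\rceil-1\}$ by letting $q'$ grow. Your write-up merely fills in details (the ceiling subtlety, the choice $q'=\max(q,n)$) that the paper's terser proof omits.
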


\begin{proof}
Assume that $S$ is a half-line semigroup, so there exists an integer $m>1$ such that $S$ is minimally generated by $\{m,\ldots ,2m-1\}$. Then, the interval $[m,2m-1]$ satisfies the lemma.

If $S=\CaS([p,q])$ and $S=\CaS([p,q'])$ for all $q'\ge q,$ the set $\N\setminus S$ is $\{1,\ldots ,m-1\}.$ So, $S$ is a non proper half-line semigroup.
\end{proof}

We say that an interval $[p,q]$ with $p>1$ is a half-line interval if $\CaS([p,q])=\CaS([p,q'])$ for all $q'\ge q.$

In \cite{libro_rosales}, it is proved that proportionally modular numerical semigroups are numerical semigroups generated by a closed interval with lower endpoint greater than 1, that is, for any proportionally modular numerical semigroup $T$ given by the equation $a x\mod b  \le cx,$ there exists an interval $[d,e]$ such that $d>1$ and $T=\CaS([d,e]).$ In this work, we assume that the lower endpoint of every interval defining a numerical semigroup is greater than 1.
Note that if $0<a\le c$ or $a=0$, $T$ is the proper numerical semigroup $\N$, and for $c\le 0,$ $T=\{0\}.$
The relationship between proportionally modular numerical semigroups and numerical semigroups generated by intervals is expressed in the following lemmas.

\begin{lemma}\cite[Lemma 5.9]{libro_rosales}
If $0<c<a<b,$ the
proportionally modular numerical semigroup defined by $ax\mod b\le cx$ is $\CaS([d,e])$ with $d=b/a$ and $e=b/(a-c).$
\end{lemma}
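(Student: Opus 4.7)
The plan is to verify both inclusions by unpacking what each membership condition really says. On one side, $x\in T$ means $ax-\lfloor ax/b\rfloor b\le cx$, and on the other, $x\in\CaS([d,e])$ means there is some $i\in\N$ with $ib/a\le x\le ib/(a-c)$, which (using $a-c>0$ since $c<a$) is equivalent to the pair of linear conditions $ib\le ax$ and $ax-ib\le cx$, i.e.\ $i\le ax/b$ and $ax-ib\le cx$. So the whole lemma reduces to comparing these two reformulations.

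For the inclusion $T\subseteq\CaS([d,e])$, I would simply witness $i$: set $i:=\lfloor ax/b\rfloor\in\N$. Then $i\le ax/b$ by definition of the floor, and $ax-ib=ax\bmod b\le cx$ by the assumption $x\in T$. Hence $ib/a\le x\le ib/(a-c)$, so $x\in [id,ie]\subseteq\CaS([d,e])$.

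For the reverse inclusion, suppose $x\in\CaS([d,e])$, so there exists $i\in\N$ with $i\le ax/b$ and $ax-ib\le cx$. The key observation is that since $i$ is a nonnegative integer bounded above by the real number $ax/b$, we have $i\le\lfloor ax/b\rfloor$, and therefore
\[
ax\bmod b \;=\; ax-\lfloor ax/b\rfloor b \;\le\; ax-ib \;\le\; cx,
\]
so $x\in T$. The case $x=0$ is handled trivially by taking $i=0$, and the hypothesis $0<c<a<b$ guarantees $d=b/a>1$ as well as $e=b/(a-c)>0$, so $[d,e]$ is a legitimate interval with lower endpoint greater than $1$.

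There is really no hard step here: the only subtlety is the integrality argument $i\le\lfloor ax/b\rfloor$ in the converse direction, which is what makes the Euclidean-division witness $\lfloor ax/b\rfloor$ optimal among all admissible $i$. The rest is algebraic rearrangement, so the proof fits naturally in a few lines.
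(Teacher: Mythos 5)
Your proof is correct and complete: the two reformulations (membership in $T$ via $ax-\lfloor ax/b\rfloor b\le cx$, and membership in $\CaS([d,e])$ via the existence of $i\in\N$ with $ib\le ax$ and $ax-ib\le cx$) are matched exactly, with $i=\lfloor ax/b\rfloor$ as the witness in one direction and the integrality bound $i\le\lfloor ax/b\rfloor$ closing the other. The paper itself gives no proof (it cites \cite[Lemma 5.9]{libro_rosales}), and your argument is essentially the standard one from that reference, so there is nothing to add.
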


\begin{lemma}\cite[Lemma 5.12]{libro_rosales}\label{IntervalToEquation}
Let $a_1,a_2,b_1,$ and $b_2$ be positive integers such that $b_1/a_1<b_2/a_2$. Then, the semigroup $\CaS([b_1/a_1,b_2/a_2])$ is the proportionally modular numerical semigroup defined by $a_1b_2x \mod (b_1b_2)\le (a_1b_2-a_2b_1)x .$
\end{lemma}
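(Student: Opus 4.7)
The plan is to reduce the claim to the preceding Lemma 5.9 by a direct change of variables. Set $a := a_1 b_2$, $b := b_1 b_2$, and $c := a_1 b_2 - a_2 b_1$, so that the displayed congruence $a_1 b_2 x \mod (b_1 b_2) \le (a_1 b_2 - a_2 b_1) x$ becomes literally $a x \mod b \le c x$. It then suffices to check that the triple $(a,b,c)$ meets the hypothesis $0 < c < a < b$ of Lemma~5.9, and that the endpoints $b/a$ and $b/(a-c)$ furnished by that lemma collapse to the prescribed rationals $b_1/a_1$ and $b_2/a_2$.

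First I would verify the ordering. Positivity of $a_1,a_2,b_1,b_2$ combined with the hypothesis $b_1/a_1 < b_2/a_2$ clears denominators to $a_1 b_2 > a_2 b_1$, i.e.\ $c > 0$; since also $a_2 b_1 > 0$, we get $c = a - a_2 b_1 < a$. The remaining inequality $a < b$ is where the standing convention introduced just before the statement plays a role: every interval defining a numerical semigroup is assumed to have lower endpoint strictly greater than~$1$. Hence $b_1/a_1 > 1$, so $a_1 < b_1$, and multiplying by $b_2 > 0$ yields $a = a_1 b_2 < b_1 b_2 = b$.

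With the hypothesis in place, Lemma~5.9 identifies the proportionally modular semigroup with $\CaS([d,e])$, where a direct computation gives
\[
d \;=\; \frac{b}{a} \;=\; \frac{b_1 b_2}{a_1 b_2} \;=\; \frac{b_1}{a_1}, \qquad e \;=\; \frac{b}{a-c} \;=\; \frac{b_1 b_2}{a_1 b_2 - (a_1 b_2 - a_2 b_1)} \;=\; \frac{b_1 b_2}{a_2 b_1} \;=\; \frac{b_2}{a_2}.
\]
This is precisely the interval in the statement, so the two semigroups coincide.

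The only step that is not a mechanical substitution is the verification of $a < b$, which is why the standing assumption that the lower endpoint of the interval exceeds~$1$ is essential; without it the hypotheses of Lemma~5.9 could fail (the case $a_1 \ge b_1$ would force $d \le 1$, which is excluded by convention anyway). I do not anticipate any further obstacle.
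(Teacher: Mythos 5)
The paper does not prove this statement at all---it is imported verbatim from \cite[Lemma 5.12]{libro_rosales}---so there is no internal proof to compare against; I can only judge your argument on its own terms, and it is correct. The substitution $a=a_1b_2$, $b=b_1b_2$, $c=a_1b_2-a_2b_1$ does turn the displayed inequality into the form of the preceding lemma, your verifications $c>0$ and $c<a$ are right, and the endpoint computation $b/a=b_1/a_1$, $b/(a-c)=b_1b_2/(a_2b_1)=b_2/a_2$ is exact. The one point worth flagging is the step $a<b$: as you correctly observe, this is equivalent to $b_1/a_1>1$, which is \emph{not} among the stated hypotheses of the lemma and only holds because of the paper's standing convention that every defining interval has lower endpoint greater than $1$. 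So your proof establishes the lemma only under that convention, whereas the statement (and its source) is true for arbitrary positive integers with $b_1/a_1<b_2/a_2$; the unconditional proof is the direct one, namely that $x\in\CaS([b_1/a_1,b_2/a_2])$ iff some multiple $kb_1b_2$ lies in $[a_2b_1x,\,a_1b_2x]$, which happens iff $a_1b_2x \bmod (b_1b_2)\le (a_1b_2-a_2b_1)x$. Since every later use of the lemma in this paper involves intervals with $p>1$, your conditional version suffices here, but you should state the restriction explicitly rather than leave it as a remark at the end.
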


Note that for a closed interval $[d,e],$ the intersection $j[d,e]\cap (j+1)[d,e]$ is empty for every nonnegative integer $j\le \lfloor d/(e-d) \rfloor,$ or, equivalently, $j[d,e]\cap (j+1)[d,e]$ is not empty iff $j\in [ \lceil d/(e-d) \rceil ,\infty )\cap \N.$ We denote by $\phi([d,e])$ the $\min \{j\in \N\mid (j+1)[d,e]\cap j[d,e]\neq \emptyset\},$ that is, $\phi([d,e])= \lceil d/(e-d) \rceil$.

Really, there exists a few amount of proportionally modular numerical semigroups compared to the number of numerical semigroups. Table \ref{tabla_prop_mod_vs_numerical} shows the comparison of both sets up to genus $44$. This table has been computed in a cluster of computers (\cite{super}) by using our modified version of \cite{ProgFroHiv} based on \cite{Fromentin}.
\begin{table}[h]
\centering
{\footnotesize
\begin{tabular}{|c|c|c||c|c|c||c|c|c|}
  \hline
  genus & no. sem. & no. prop. & genus & no. sem. & no. prop. & genus & no. sem. & no. prop. \\
   & & mod. sem. & & & mod. sem.  & & & mod. sem.  \\
  \hline
  0 & 1 & 1 & 15 & 2857 & 87 & 30 &  5646773 & 359 \\ \hdashline
  1 & 1 & 1 & 16 & 4806 & 93 & 31 & 9266788 & 353 \\ \hdashline
  2 & 2 & 2 & 17 & 8045 & 105 & 32 & 15195070 & 375 \\ \hdashline
  3 & 4 & 4 & 18 & 13467 & 125 & 33 & 24896206 & 401 \\ \hdashline
  4 & 7 & 6 & 19 & 22464 & 130 & 34 & 40761087 & 405 \\ \hdashline
  5 & 12 & 9 & 20 & 37396 & 145 & 35 & 66687201 & 445 \\ \hdashline
  6 & 23 & 15 & 21 & 62194 & 169 & 36 & 109032500 & 507 \\ \hdashline
  7 & 39 & 18 & 22 & 103246 & 173 & 37 & 178158289 & 499 \\ \hdashline
  8 & 67 & 22 & 23 & 170963 & 188 & 38 & 290939807 & 527 \\ \hdashline
  9 & 118 & 32 & 24 & 282828 & 224 & 39 & 474851445 & 573 \\ \hdashline
  10 & 204 & 36 & 25 & 467224 & 218 & 40 & 774614284 & 566 \\ \hdashline
  11 & 343 & 42 & 26 & 770832 & 238 & 41 & 1262992840 & 604 \\ \hdashline
  12 & 592 & 57 & 27 & 1270267 & 275 & 42 & 2058356522 & 682\\ \hdashline
  13 & 1001 & 58 & 28 & 2091030 & 273 & 43 & 3353191846 & 655 \\ \hdashline
  14 & 1693 & 69 & 29 & 3437839 & 303 & 44 & 5460401576 & 709 \\ 
  \hline
\end{tabular}
}
\caption{Proportionally modular numerical semigroups up to genus $44$.}\label{tabla_prop_mod_vs_numerical}
\end{table}

In order to achieve the main goal of this work, we need to improve  the knowledge of the proportionally modular numerical semigroups. In particular, we have to introduce the minimal and maximal intervals defining them.

Given an open interval $]p,q[\subset \R_\ge$, the numerical semigroup $\CaS(]p,q[)$ is called opened modular numerical semigroup. For a given numerical semigroup, $\EH(S)\subset \N\setminus S$ is the set of elements $\N\setminus S$ such that $S\cup \{x\}$ is a semigroup.

In \cite{RoGarGarUrb}, it is given a characterization of the numerical semigroups defined from closed intervals.

\begin{proposition}{\cite[Proposition 23]{RoGarGarUrb}}\label{sucesiones}
Let $S\neq \N$ be a numerical semigroup minimally generated by $\{n_1,\ldots ,n_t\}$. Then, $S=\CaS([p,q])$ with $q>p>1$ if and only if the following conditions hold:
\begin{enumerate}
\item for all $i\in[t]$, there exists $k_i\in [n_i-1]$ such that $n_i/k_i\in [p,q],$
\item for all $x\in \EH(S)$ and $k_x\in [x-1]$, $x/k_x\notin [p,q].$
\end{enumerate}
\end{proposition}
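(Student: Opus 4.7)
My plan is to unfold the definition $\CaS([p,q])=\bigcup_{i\in\N}i[p,q]\cap\N$ and reduce both directions to the elementary observation that, for $n>0$, one has $n\in\CaS([p,q])$ if and only if there exists an integer $k\geq 1$ with $n/k\in[p,q]$; since $p>1$, such a $k$ automatically satisfies $k\leq n-1$, i.e.\ $k\in[n-1]$. This equivalence will be used throughout.

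For the direct implication I would fix $S=\CaS([p,q])$ and apply the observation above to each minimal generator $n_i$, which is a positive element of $S$, to produce the required $k_i\in[n_i-1]$ with $n_i/k_i\in[p,q]$; this yields condition~(1). For condition~(2), I would note that any $x\in\EH(S)\subset\N\setminus S=\N\setminus\CaS([p,q])$ fails to belong to $k[p,q]$ for any $k\geq 1$, so in particular $x/k_x\notin[p,q]$ for every $k_x\in[x-1]$.

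For the converse, condition~(1) immediately gives each minimal generator of $S$ inside $\CaS([p,q])$, and since $\CaS([p,q])$ is a semigroup this forces $S\subseteq\CaS([p,q])$. The reverse inclusion is where the real work lies, and I would argue by contradiction: supposing $\CaS([p,q])\setminus S$ non-empty, let $z$ be its smallest element in the usual order of $\N$. I would then show $z\in\EH(S)$ as follows: for every $s\in S\setminus\{0\}$ the sum $z+s$ lies in $\CaS([p,q])$ (which contains $S$ and is closed under addition) and is strictly greater than $z$, so the minimality of $z$ forces $z+s\in S$; the same reasoning with $s=z$ gives $2z\in S$, and hence $S\cup\{z\}$ is a semigroup. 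On the other hand, $z\in\CaS([p,q])$ produces an integer $k\geq 1$ with $z/k\in[p,q]$, and $p>1$ forces $k\in[z-1]$, directly contradicting condition~(2).

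The main obstacle is this minimality step in the converse: one must be sure that the extracted $z$ actually belongs to $\EH(S)$, i.e.\ is a \emph{special} gap of $S$ rather than an arbitrary one. The key invariant that makes this work is that $\CaS([p,q])$ is itself a semigroup sandwiching $S$ from above, so translation by elements of $S$ keeps us inside $\CaS([p,q])$; once this is in place, everything else reduces to the equivalence isolated in the first paragraph.
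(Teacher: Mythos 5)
The paper does not prove this proposition itself (it is quoted from \cite[Proposition 23]{RoGarGarUrb}), so your argument has to stand on its own. Most of it does: the membership criterion ($n\in\CaS([p,q])$ for $n>0$ iff $n/k\in[p,q]$ for some integer $k\ge 1$, and $p>1$ forces $k\le n-1$) is correct and cleanly handles the direct implication and the inclusion $S\subseteq\CaS([p,q])$ in the converse.

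There is, however, a genuine gap in the last step of the converse. You take $z$ to be the \emph{smallest} element of $\CaS([p,q])\setminus S$ and then claim that, for $s\in S\setminus\{0\}$, the inequality $z+s>z$ together with the minimality of $z$ forces $z+s\in S$. That inference is backwards: minimality of $z$ only says that every element of $\CaS([p,q])\setminus S$ is $\ge z$, which $z+s$ satisfies, so $z+s$ may perfectly well remain a gap of $S$ inside $\CaS([p,q])$. Concretely, for $S=\langle 3,7,8\rangle$ sitting inside a larger semigroup containing $\{1,2,4,5\}$, the smallest new element is $1$, and $1+3=4\notin S$, so $1\notin\EH(S)$; the element of $\EH(S)$ you need is $5$, the \emph{largest} gap. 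The fix is to take $z=\max\bigl(\CaS([p,q])\setminus S\bigr)$, which exists because $\CaS([p,q])\setminus S\subseteq\N\setminus S$ is finite ($S$ being a numerical semigroup). Then $z+s\in\CaS([p,q])$ and $z+s>z$ do force $z+s\in S$ by maximality, likewise $2z\in S$, so $z\in\EH(S)$, and the membership criterion applied to $z\in\CaS([p,q])$ contradicts condition (2) exactly as you intended. With that one-word change (and the accompanying finiteness remark) your proof is complete.
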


\begin{remark}
Previous proposition means that for each possible closed interval $[p,q]$ such that $p>1$ and $S=\CaS([p,q])$, it has to exist a sequence $p\le x_1/y_1\leq \cdots \leq x_{l}/y_{l}\le q$ and two integers $h,r\in \N$ such that:
\begin{enumerate}
\item $\{x_h,\ldots ,x_{h+r}\}= \{n_1,\ldots ,n_t\}$,
\item $y_{h+i}\in [x_{h+i}-1]$ for $i\in \{0,\ldots ,r \}$,
\item $x_{h-1}/y_{h-1}\neq x_{h}/y_{h},$
\item $x_{h+r}/y_{h+r}\neq x_{h+r+1}/y_{h+r+1},$
\item and, $x/k_x\notin [p,q]$ for all $x\in \EH(S)$ and $\forall k_x\in [x-1].$
\end{enumerate}

Note that, if $S$ is a proportionally modular numerical semigroup, there exists a finite set of intervals $[x_h/y_h,x_{h+r}/y_{h+r}]$ satisfying these conditions. Following this idea, an algorithm to check if a numerical semigroup is proportionally modular is given in \cite[Algorithm 24]{RoGarGarUrb}.
\end{remark}

Now, we introduce the concepts of minimal and maximal intervals defining proportionally modular numerical semigroups.

\begin{definition}
Given $S$ a proportionally modular numerical semigroup, a closed interval $[\widetilde{p},\widetilde{q}]$ is a minimal (closed) interval defining $S$ if
$[\widetilde{p},\widetilde{q}]$ is a minimal element respect to inclusion in $\{ [p,q]\subset (1,\infty)\mid S=\CaS([p,q]) \}.$
\end{definition}

Note that for any $S$ non proper proportionally modular numerical semigroup, the set of minimal intervals defining $S$ is finite. We denote by $\widetilde{L}_S$ this set.

\begin{lemma}\label{holgura}
Given $p,q\in \R_>$ with $\CaS([p,q])$ a non proper semigroup,
there exists an unique maximal open interval (respect to inclusion) $]\widehat{p},\widehat{q}[$ such that $[p,q]\subset ]\widehat{p},\widehat{q}[$ and $\CaS(]\widehat{p},\widehat{q}[)=\CaS([p,q])$.
\end{lemma}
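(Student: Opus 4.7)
My plan is to construct $\widehat p$ and $\widehat q$ explicitly as the closest ``forbidden'' ratios $n/i$ with $n\in\N\setminus S$ that lie just below $p$ and just above $q$, and then to verify that the open interval they determine is the unique maximal one. The guiding principle is that enlarging $[p,q]$ to an open interval $]a,b[$ preserves the semigroup $\CaS$ if and only if no ratio $n/i$ with $n\in\N\setminus S$ and $i\in\N^*$ enters $]a,b[$.

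Writing $S=\CaS([p,q])$, I would set
\[
\widehat p:=\max\{n/i\mid n\in\N\setminus S,\ i\in\N^*,\ n/i<p\}
\]
and, with the convention $\min\emptyset=+\infty$,
\[
\widehat q:=\min\{n/i\mid n\in\N\setminus S,\ i\in\N^*,\ n/i>q\}.
\]
The first task is to verify that these are attained extrema with $\widehat p<p$ and $\widehat q>q$. Since $S$ is non proper, $\N\setminus S$ is finite; and for each fixed $n\in\N\setminus S$ the condition $n/i<p$ amounts to $i>n/p$, so the values $n/i$ are maximized at the smallest admissible $i$. The first set is therefore finite, and it is non empty because $1\in\N\setminus S$ together with $1<p$ puts $(1,1)$ inside it; the analogous argument handles $\widehat q$ whenever the corresponding set is non empty. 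In particular $[p,q]\subset ]\widehat p,\widehat q[$.

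Next I would check that $\CaS(]\widehat p,\widehat q[)=S$. The inclusion $\supseteq$ is immediate from $[p,q]\subset ]\widehat p,\widehat q[$. For $\subseteq$, pick $m\in\CaS(]\widehat p,\widehat q[)$ and $i\in\N^*$ with $m/i\in ]\widehat p,\widehat q[$; if $m\notin S$ then $m/i\notin[p,q]$, so either $m/i<p$ (making $(m,i)$ an index in the defining set of $\widehat p$, forcing $m/i\le\widehat p$ and contradicting $m/i>\widehat p$) or $m/i>q$ (symmetrically contradicting $m/i<\widehat q$). Hence $m\in S$.

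For maximality and uniqueness, I would take any open interval $]a,b[$ containing $[p,q]$ with $\CaS(]a,b[)=S$. For every pair $(n,i)$ with $n\in\N\setminus S$ and $n/i<p$, the hypothesis $\CaS(]a,b[)\subseteq S$ forces $n/i\notin ]a,b[$; since $n/i<p\le b$ this gives $n/i\le a$, and taking the maximum yields $a\ge\widehat p$. Symmetrically $b\le\widehat q$, and thus $]a,b[\subset ]\widehat p,\widehat q[$, which delivers both maximality and uniqueness. The main technical step is the finiteness argument that lets one replace $\sup$ by $\max$ and $\inf$ by $\min$, since this is precisely what guarantees the strict separations $\widehat p<p$ and $\widehat q>q$; the remaining work is a direct translation of the ``no forbidden ratio enters'' condition.
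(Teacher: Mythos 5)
Your proof is correct and follows essentially the same route as the paper: the paper's $\widehat{p}$ and $\widehat{q}$ are precisely your nearest forbidden ratios (written there as $p-\min\{(ip-s)/i\}$ and $q+\min\{(s-iq)/i\}$ over the gaps lying in each hole between consecutive dilates, with $\widehat{q}=\infty$ in the half-line case matching your $\min\emptyset=+\infty$ convention), and the paper simply asserts the verification that you spell out. The only slip is the claim that your defining set for $\widehat{p}$ is finite --- it is infinite, since each gap $n$ contributes $n/i$ for every $i>n/p$ --- but it is bounded above and its supremum is attained (at $i=\lceil n/p\rceil$ for each of the finitely many gaps), so the argument stands.
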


\begin{proof}
Let $i_0$ be the minimal integer satisfying $iq\le (i+1)p$, $X=\N\setminus \CaS([p,q])$ and $\widehat{p}=p-\min \{\frac{ip-s}{i} \mid s\in X\cap ((i-1)q,ip), \, i=1,\ldots ,i_0 \}$.

Assuming $[p,q]$ is a non half-line interval, the lemma holds for $\widehat{q}=q+\min \{ \frac{s-iq}{i} \mid s\in X\cap (iq,(i+1)p), \, i=1,\ldots ,i_0-1 \}$. In other case, $\widehat{q}=\infty$ has to be consider.
\end{proof}

\begin{remark}\label{def_intervalos_gorro}
Given $S$ a non proper proportionally modular numerical semigroup, $\widehat{L}_S$ denotes the finite set of open intervals $\cup_{[\widetilde{p},\widetilde{q}]\in \widetilde{L}_S} \{]\widehat{p},\widehat{q}[\}$ where $]\widehat{p},\widehat{q}[$ is the unique interval obtained from the proof of Lemma \ref{holgura}. \end{remark}

Algorithm \ref{algoritmo_intervalos_max_m_yin} computes the sets of minimal and maximal intervals defining a proportionally modular numerical semigroup. This algorithm is based on \cite[Algorithm 24]{RoGarGarUrb}. Note that both algorithms are not equal. For example, if you apply the algorithm \cite[Algorithm 24]{RoGarGarUrb} to the proportionally modular numerical semigroup minimally generated by $\{2,3\}$, the obtained closed intervals determining this semigroup are $[3/2,2]$, $[2,3]$ and $[3/2,3]$, but, trivially, the last one is not a minimal interval.

\begin{algorithm}[h]\label{algoritmo_intervalos_max_m_yin}
	\KwIn{$\Lambda_S$ the minimal generating set of a non proper numerical semigroup $S$.}
	\KwOut{If $S$ is proportionally modular, the sets $\widetilde{L}_S$ and $\widehat{L}_S$, the empty set in other case.}

\Begin{
	Compute $\EH(S)$\;
	$A \leftarrow \{(a,k_a)\mid a\in \Lambda_S \cup (\EH(S)\setminus \{1\}),\, k_a\in[a-1] \}$\;
    $A \leftarrow \mbox{Sort }A$ respect to $(a,k)\preceq (a',k')$ if and only if $a/k< a'/k'$, or $a/k= a'/k'$ and $a<a'$\;
	$L \leftarrow  \{\big((x_h,y_h),(x_{h+r},y_{h+r})\big)\mid (x_h,y_h),(x_{h+1},y_{h+1}),\ldots ,(x_{h+r},y_{h+r})\in A,\, \{x_h,\ldots
,x_{h+r}\}= \{n_1,\ldots ,n_t\},\, x_{h-1}/y_{h-1}\neq x_{h}/y_{h},\, x_{h+r}/y_{h+r}\neq x_{h+r+1}/y_{h+r+1}\}$\;
    \If{$L=\emptyset$}
        {
        \Return $\emptyset$
        }
    $\widetilde{L}_S \leftarrow \min_{\subseteq}\{ [a/k, a'/k']\mid \big( (a,k),(a',k') \big)\in L \}$\;
    $\widehat{L}_S \leftarrow \{ ]\widehat{p}, \widehat{q}[ \mid [\widetilde{p},\widetilde{q}]\in \widetilde{L}_S\}$ (Remark \ref{def_intervalos_gorro})\;
	\Return $\widetilde{L}_S$ and $\widehat{L}_S$ \;  
}
\caption{Test if a semigroup is a proportionally modular numerical semigroup. In that case, compute the sets of minimal and maximal intervals defining it.}
\end{algorithm}

\begin{example}
Let $S$ be the numerical semigroup minimally generated by $\{10,11,12,13,27\}.$ The set $\EH(S)$ is $\{28,29\}$. So, Algorithm \ref{algoritmo_intervalos_max_m_yin} determines that $S$ is a proportionally modular numerical semigroup defined by the minimal intervals $\widetilde{L}_S= \{[\frac{27}{25},\frac{10}{9}],[10,\frac{27}{2}]\}$ and the corresponding maximal intervals $\widehat{L}_S= \{]\frac{14}{13},\frac{29}{26}[,]\frac{29}{3},14[\}$. In fact, $S$ is the numerical semigroup obtained from the inequality $11 x \mod 110 \leq 3 x$.
\end{example}

\section{Fixing notations for affine semigroups}\label{seccion_notacion_Nsemig}

Let $\{e_1,\ldots ,e_n\}\subset\N^n$ be the canonical basis of $\R^n$. We define $\langle e_{i_0},\ldots ,e_{i_t}\rangle _\R$ as the $\R$-vector space generated by $\{e_{i_0},\ldots ,e_{i_t}\}$.

For a subset $A\subseteq\Q^n$, denote by $\CH(A)$ the convex hull of the set $A$, that is, the smallest convex subset of $\Q^n$ containing $A$, and by $\VSet(A)$ the vertex set of $\CH(A)$.
A polyhedron is a region defined by the intersection of a finitely many closed half-spaces, and a polytope is the convex hull of a finite number of points, or, equivalently, it is a bounded polyhedron (see \cite{Bruns} for details). From these definitions, it is easy to prove that for checking if a finite set of points are in the same region defined by a hyperplane, it is enough to test if the vertices of its convex hull hold that property.

For $\{a_1,\ldots ,a_k\}\subset [n]$ and $x\in \N^n$ we define $\pi_{\{a_1,\ldots ,a_k\}}(x)=(x_{a_1},\ldots,x_{a_k})$, that is, the projection of $x$ on its $\{a_1,\ldots ,a_k\}$ coordinates. Fixed $A\subset \Q^n,$ $\pi_{\{a_1,\ldots ,a_k\}}(A)=\{\pi_{\{a_1,\ldots ,a_k\}}(x)\mid x\in A\}$. Besides, $\sigma_{\{a_1,\ldots ,a_k\}}(A)$ denotes the set $\{\pi_{\{a_1,\ldots ,a_k\}}(x)\mid x\in A\mbox{ and } x_i=0,\, \forall i\in [n]\setminus \{a_1,\ldots ,a_k\} \}.$

Fixed $f(x_1,\dots,x_n)=f_1x_1+\dots+f_nx_n$ and $g(x_1,\dots,x_n)=g_1x_1+\dots+g_nx_n$ with $g_1,\dots,g_n,f_1,\ldots ,f_n\in \Z$, let $S\subset \N^n$ be the semigroup defined by the inequality $f(x)\mod b\leq g(x)$ where $x=(x_1,\dots,x_n)$. We assume that $f_i=f_i\mod b$ for all $i\in [n]$, so, these coefficients are nonnegative integers. It is easy to prove that if $g_i>0$ for all $i\in[n],$ $\N^n\setminus S$ is a finite set. Note that for this semigroup and for every $i\in [n]$, the set
$S\cap \langle e_i \rangle_\R$
is isomorphic to the proportionally modular numerical semigroup given by the set of natural solutions of $f_i x_i\mod b \le g_i x_i$.

We denote by $G_i$ the hyperplane with linear equation $g(x)=ib,$ and by $G_i^+$ the closed half-space defined by $g(x)\ge ib$ ($G_i^-$ is the opened half-space $g(x)< ib$). Analogously, $F_i$ is the hyperplane $f(x)=ib$ (respectively $D_i\equiv f(x)-g(x)=ib$), $F_i^+$ is $f(x)\ge ib$ (respectively $D_i^-\equiv f(x)-g(x)\le ib$), and $F_i^-$ is $f(x)< ib$ (respectively $D_i^+\equiv f(x)-g(x)> ib$).

In \cite{Bruns}, it is proved that given a polytope $\P\subset \R^n_\ge$, the monoid $\bigcup _{i\in \N}i\P\cap \N^n$ is an affine semigroup if and only if $\P\cap \tau\cap \Q^n_\ge\neq \emptyset$ for all $\tau$ extremal ray of the rational cone generated by $\P$. Equivalently to definition of $\CaS(\I)$ for a closed real interval $\I$, $\CaS(\P)$ defines the affine semigroup $\bigcup _{i\in \N}i\P\cap \N^n$.

For a set of closed intervals $L=\{[p_1,q_1], \ldots ,[p_t,q_t]\}$, we denote by $\P_L$ the polytope
$\CH(\cup_{i\in [t]} \{ p_ie_i,q_ie_i\})$, and by $\phi(L)=\max\{\phi([p_i,q_i])\mid i\in [t]\}.$
So, $\CH ( i\P_L\cup (i+1)\P_L) = i\P_L\cup (i+1)\P_L$ for every integer $i\ge \phi(L),$ and $\N^t\setminus \bigcup _{i\in \N}i\P_L\subset \CH ( \{0\}\cup \phi(L)\P_L).$ Moreover, the polytopes $i\P_L$ can be determined by hyperplanes and half-spaces:
\begin{itemize}
\item $H_{1iL}$ is the hyperplane containing the set of points $\{ip_1e_1,\ldots ,ip_te_t\}$, which equation is denoted by $h_{1iL}(x)=0$; $H_{1iL}^+$ is the closed half-space delimited by $H_{1iL}$ not containing the origin,
\item $H_{2iL}$ is the hyperplane containing $\{iq_1e_1,\ldots ,iq_te_t\}$, its equation is $h_{2iL}(x)=0$, and $H_{2iL}^-$ is the closed half-space delimited by $H_{2iL}$ containing the origin.
\end{itemize}
So, the convex set $i\P_L$ is $\R^t_\ge\cap H_{1iL}^+\cap H_{2iL}^-.$ Note that some expressions for $h_{1iL}(x)=0$ and $h_{2iL}(x)=0$ can be easily constructed by using linear algebra. Consider
\begin{equation}\label{eq_hs}
h_{1iL}(x):= p_1\cdots p_t(i-\sum_{j\in[t]} x_j/p_j) \mbox{ and } h_{2iL}(x):= q_1\cdots q_t(i-\sum_{j\in[t]} x_j/q_j).
\end{equation}
Furthermore, $H_{1iL}^+$ is defined by $h_{1iL}(x)\le 0,$ and $H_{2iL}^-$ by $h_{2iL}(x)\ge 0.$
We also consider the opened half-spaces $H_{1iL}^-$ defined by $h_{1iL}(x)> 0,$ and $H_{2iL}^+$ by $h_{2iL}(x)<0.$

Given any $P\in \N^n$, $\kappa_{L}(P)$ denotes the maximal integer $i$ such that $h_{1iL}(P)<0$. In case $h_{11L}(P)\ge 0$, $\kappa_{L}(P)=0$. Besides, we define by $\theta_{L}(P)$ the function such that $\theta_{L}(P)=1$ if there exists $i\in \N$ with $P\in i\P_L$, and $\theta_{L}(P)=0$ in other case. Note that $\theta_{L}(P)$ can be determined in the following way: compute the sets $\tau_P\cap \P_L=\overline{AB}$ and
$\{k\in\N|\frac{||P||}{||B||}\leq k \leq \frac{||P||}{||A||}\}$, where $\tau_P$ is the ray containing $P$, and $||X||$ is the Euclidean norm of $X,$ that is, $||X||^2=||(x_1,\dots,x_t)||^2=\sum_{j=1}^t x_j^2$; if $\{k\in\N|\frac{||P||}{||B||}\leq k \leq \frac{||P||}{||A||}\}=\emptyset$, $\theta_{L}(P)$ is $0$, and $1$ in other case.

Assumed $n>t,$ let
$\mu_{1,i}e_t+\mu_{2,i}e_i$ and $-\nu_{1,i}e_t+\nu_{2,i}e_i$ where $i\in \{t+1,\ldots ,n\}$ and $\mu_{1,i},\mu_{2,i},\nu_{1,i},\nu_{2,i}\in\Q$ be some vectors, and $\tau_{1iL}(x)=0$ and $\tau_{2iL}(x)=0$ be equations of the hyperplanes defined by these vectors and the sets of points $\{iq_1e_1,\ldots ,iq_te_t\}$ and $\{ip_1e_1,\ldots ,ip_te_t\}$ respectively. For every $i\in \Z$, consider
\begin{equation}\label{eqtaus}
\begin{array}{c}
\tau_{1iL}(x):=q_1\cdots q_t\mu_{2,t+1}\cdots \mu_{2,n}\displaystyle{\Big(i-\sum_{j\in[t]} \frac{x_j}{q_j}+ \sum_{j=t+1}^n \frac{\mu_{1,j}}{q_t  \mu_{2,j}} x_j \Big)},
\\
\\
\tau_{2iL}(x):=p_1\cdots p_t\nu_{2,t+1}\cdots \nu_{2,n}\displaystyle{\Big(i-\sum_{j\in[t]} \frac{x_j}{p_j}- \sum_{j=t+1}^n \frac{\nu_{1,j}}{p_t  \nu_{2,j}} x_j \Big)}.
\end{array}
\end{equation}

\section{A geometrical characterization of proportionally modular affine semigroups}\label{seccion_caracterizacion_geometrica}

Let $S$ be the proportionally modular semigroup given by the modular inequality $f(x)\mod b\leq g(x)$ where $x=(x_1,\dots,x_n)$, $g_1,\dots,g_n,f_1,\ldots ,f_n\in \Z$ and $b\in\N.$ Assume that $f_i=f_i\mod b$ for all $i\in [n]$, so, these coefficients are nonnegative integers.

As in previous sections, we denote by $F_i$ the hyperplane with linear equation $f(x)=ib$ (respectively $D_i\equiv f(x)-g(x)=ib$), and by $F_i^+$ the closed half-space defined by $f(x)\ge ib$ (respectively $D_i^-\equiv f(x)-g(x)\le ib$). Fixed these hyperplanes, $\P_i$ denotes the polyhedron $F_i^+\cap D_i^-$ with $i\in \Z.$
Note that, since $b>0,$ and $f(e_j)\ge 0$ for all $j\in[n],$ set a negative integer $i$, every $P\in \P_i\cap \N^n$ satisfies $P\in F_0^+\cap D_0^-$. So, $\cup _{i\in \N} (F_i^+\cap D_i^-)\cap \N^n=\cup _{i\in \Z} (F_i^+\cap D_i^-)\cap \N^n$. Besides, for all $i\in \N,$ the points $P$ in $F_i^+\cap D_i^-$ hold $g(P)\ge 0.$

\begin{lemma}\label{InPropAffMod}
$P\in S$ if and only if there exists $i\in \N$ such that $P\in \P_i\cap \N^n.$
\end{lemma}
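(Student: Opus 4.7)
The plan is to translate the arithmetic condition $f(P)\bmod b\le g(P)$ into the geometric condition $P\in \P_i$ for some $i\in\N$ via the Euclidean division of $f(P)$ by $b$. Since $f_j\ge 0$ for all $j\in[n]$ and $P\in\N^n$, we have $f(P)\ge 0$, so we may write uniquely $f(P)=qb+r$ with $q\in\N$ and $0\le r<b$, where $r=f(P)\bmod b$.

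For the forward implication, assume $P\in S$, i.e., $r\le g(P)$. I would take $i:=q$. Then $f(P)=ib+r\ge ib$, which places $P$ in $F_i^+$, and $f(P)-g(P)=ib+r-g(P)\le ib$ (using $r\le g(P)$), which places $P$ in $D_i^-$. Hence $P\in F_i^+\cap D_i^-=\P_i$.

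For the reverse implication, suppose $P\in\P_i\cap\N^n$ for some $i\in\N$, so $f(P)\ge ib$ and $f(P)-g(P)\le ib$. From $qb+r=f(P)\ge ib$ together with $0\le r<b$, I get $q\ge i$. Therefore
\[
g(P)\ \ge\ f(P)-ib\ =\ (q-i)b+r\ \ge\ r\ =\ f(P)\bmod b,
\]
which is precisely the defining inequality of $S$, so $P\in S$.

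The argument is essentially bookkeeping around the division algorithm; there is no genuine obstacle. The only subtlety worth flagging is that the integer $i$ witnessing $P\in\P_i$ need not equal the quotient $q$: in the reverse direction any $i\le q$ works, and the gap $(q-i)b$ is absorbed harmlessly because it only strengthens the bound $g(P)\ge r$. This is also consistent with the remark preceding the lemma, namely that for $P\in\N^n$ the polyhedra $\P_i$ with $i<0$ contribute nothing new, so restricting to $i\in\N$ is indeed sufficient.
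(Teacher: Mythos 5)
Your proof is correct and follows essentially the same route as the paper's, namely bookkeeping around the Euclidean division $f(P)=qb+r$. The only cosmetic difference is in the reverse direction: the paper splits into the cases $g(P)\ge b$ and $g(P)<b$ to identify $f(P)-ib$ with the remainder, whereas you observe $q\ge i$ and bound $g(P)\ge (q-i)b+r\ge r$ directly, which is a slightly cleaner way to absorb the gap $(q-i)b$.
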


\begin{proof}

For any $P\in\N^n,$ there exist two nonnegative integers $i$ and $r$ such that $f(P)=ib+r$ with $r\in[0,b).$ Assume $P\in S,$ so $f(P)\mod b\le g(P).$ Furthermore, $0\le f(P)\mod b=r=f(P)-ib\le g(P)$, and then $P$ belongs to $\P_i\cap \N^n.$

Consider now any $P\in \P_i\cap \N^n$ with $i\in \N$, so $0\le f(P)-ib \le g(P).$
Since $f(P)\mod b=f(P)-ib\mod b\le b$, if $g(P)\ge b$, trivially, $P\in S.$ Suppose that $g(P)<b.$ In that case, $0\le f(P)-ib \le g(P)<b$, and again $f(P)\mod b=f(P)-ib\mod b\le g(P)$, and $P\in S.$
\end{proof}

Now, we have the necessary tools to introduce a geometrical characterization of proportionally modular semigroups in the next result.

\begin{theorem}\label{CharactPropAffMod}
$S\subset \N^n$ is a proportionally modular affine semigroup if and only if there exist two linear functions with integer coefficients $f(x)$ and $d(x)$ with $f_1,\ldots, f_n\ge 0$, an integer $b>0$ and two families of half-spaces $\{F_i^+\}_{i\in \N}$ and $\{D_i^-\}_{i\in \N},$ where $F_i^+\equiv f(x)\ge ib$ and $D_i^-\equiv d(x)\le ib$, such that $S= \cup _{i\in \N} (F_i^+\cap D_i^-)\cap \N^n.$
\end{theorem}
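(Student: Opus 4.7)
The plan is to observe that the theorem is, modulo a tiny change of variables, an almost immediate restatement of Lemma \ref{InPropAffMod}. The half-space $D_i^-$ introduced just before that lemma is written as $f(x)-g(x)\le ib$, so the substitution $d(x):=f(x)-g(x)$ (in one direction) and $g(x):=f(x)-d(x)$ (in the other) converts cleanly between the two formulations. I would split the proof into the two implications and invoke the lemma directly in each.

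For the direct implication, assume $S$ is proportionally modular, defined by $f(x)\bmod b\le g(x)$ with $f_i\ge 0$. Set $d(x):=f(x)-g(x)$, which is a linear form with integer coefficients. Then the half-space $D_i^-\equiv d(x)\le ib$ of the theorem statement agrees with the $D_i^-$ from the preamble of Lemma \ref{InPropAffMod}, so $F_i^+\cap D_i^-=\P_i$. That lemma then gives $P\in S\iff\exists\,i\in\N$ with $P\in\P_i\cap\N^n$, which is exactly the union description required.

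For the converse, assume $S=\bigcup_{i\in\N}(F_i^+\cap D_i^-)\cap\N^n$ with $f,d,b$ as specified. Set $g(x):=f(x)-d(x)$, again a linear form with integer coefficients. The half-space $D_i^-\equiv d(x)\le ib$ is then the same as $f(x)-g(x)\le ib$, so Lemma \ref{InPropAffMod} applied in the reverse direction yields $P\in S$ if and only if $f(P)\bmod b\le g(P)$, which exhibits $S$ as the proportionally modular semigroup defined by the modular inequality $f(x)\bmod b\le g(x)$.

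The only point that needs checking is that the lemma's preamble assumed the normalization $f_i=f_i\bmod b$, while the theorem merely requires $f_i\ge 0$. Inspecting the proof of Lemma \ref{InPropAffMod}, this normalization is never actually used: what is needed is only $f(P)\ge 0$ for every $P\in\N^n$ (so that $f(P)$ can be written $ib+r$ with $i\in\N$ and $r\in[0,b)$), and this is guaranteed by $f_i\ge 0$. Hence no extra bookkeeping is required and the theorem follows. I do not foresee a genuine obstacle here; the content lies entirely in Lemma \ref{InPropAffMod}, and the theorem merely repackages it in an intrinsic, coordinate-free form that will be the starting point for the algorithmic characterizations of subsequent sections.
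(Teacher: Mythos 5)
Your proposal is correct and follows essentially the same route as the paper: both directions are reduced to Lemma \ref{InPropAffMod} via the substitution $d(x)=f(x)-g(x)$ in one direction and $g(x)=f(x)-d(x)$ in the other. Your additional observation that the lemma's normalization $f_i=f_i\bmod b$ is not actually needed (only $f_i\ge 0$, so that $f(P)\ge 0$ for $P\in\N^n$) is a small point of care that the paper passes over silently, but it does not change the argument.
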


\begin{proof}
Given $S$ a proportionally modular semigroup, $S$ is the set of nonnegative integer solutions of an inequality $f(x)\mod b\leq g(x)$ where $f(x)$ and $g(x)$ are linear functions with integers coefficients, and $b\in\N^*$. From Lemma \ref{InPropAffMod}, taking the half-spaces $F_i^+\equiv f(x)\ge ib$ and $D_i^-\equiv (f-g)(x)\le ib$ the result holds.

Conversely, if $S= \cup _{i\in \N} (F_i^+\cap D_i^-)\cap \N^n$ with $F_i^+\equiv f(x)\ge ib$ and $D_i^-\equiv d(x)\le ib$ where $f(x)$ and $d(x)$ are linear functions and $b\in\N^*$, again, by Lemma \ref{InPropAffMod}, $S$ is the proportionally modular semigroup given by the inequality $f(x)\mod b\leq (f-d)(x)$.
\end{proof}

From previous results, the set of gaps of a proportionally modular semigroup can be described geometrically too.

\begin{corollary}\label{GAPS}
Let $S\subset \N^n$ be a proportionally modular affine semigroup, then $(\N^n\setminus S)\cap G_0^+=\cup _{i\in \N^*} (F_i^-\cap D_{i-1}^+\cap G_0^+)\cap \N^n.$
\end{corollary}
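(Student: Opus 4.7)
The plan is to deduce this directly from Lemma \ref{InPropAffMod} by choosing the unique index determined by $f(P)/b$.

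First I would rephrase membership in $S$ using the lemma: $P\in S$ iff there exists some $j\in\N$ with $jb\leq f(P)$ and $f(P)-g(P)\leq jb$, equivalently $f(P)-g(P)\leq jb\leq f(P)$. Hence $P\notin S$ iff no integer $j$ lies in the interval $[f(P)-g(P),\,f(P)]/b$.

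For the inclusion ``$\subseteq$'', take $P\in (\N^n\setminus S)\cap G_0^+\cap\N^n$ and set $i:=\lfloor f(P)/b\rfloor+1$. Since every $f_k\ge 0$ and $P\in\N^n$, we have $f(P)\ge 0$, so $i\in\N^*$. By construction $(i-1)b\le f(P)<ib$, giving $P\in F_{i-1}^+\cap F_i^-$. Now $P\notin S$ together with $P\in F_{i-1}^+$ forces $P\notin D_{i-1}^-$ (otherwise $P\in \P_{i-1}\cap\N^n\subseteq S$), that is $P\in D_{i-1}^+$. Combined with $g(P)\ge 0$, this places $P$ in the required union.

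For the reverse inclusion ``$\supseteq$'', suppose $P\in F_i^-\cap D_{i-1}^+\cap G_0^+\cap\N^n$ for some $i\in\N^*$. If $P$ belonged to $S$, by Lemma \ref{InPropAffMod} there would exist $j\in\N$ with $f(P)-g(P)\le jb\le f(P)$. But $f(P)<ib$ forces $j\le i-1$, while $f(P)-g(P)>(i-1)b$ forces $j\ge i$, a contradiction. So $P\in\N^n\setminus S$, and clearly $P\in G_0^+$.

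The main (very minor) obstacle is purely bookkeeping: verifying that the chosen index $i=\lfloor f(P)/b\rfloor +1$ is strictly positive, which uses the standing hypothesis $f_k=f_k\bmod b\ge 0$, and checking that the strict/closed inequalities defining $F_i^-$, $D_{i-1}^+$ and $G_0^+$ line up correctly with the conditions extracted from Lemma \ref{InPropAffMod}. No further geometric input beyond that lemma is needed.
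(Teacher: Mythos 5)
Your proof is correct, and it is a cleaner, more direct route than the one in the paper. Both arguments ultimately rest on Lemma \ref{InPropAffMod} (the paper invokes it through Theorem \ref{CharactPropAffMod}), but you work pointwise: for a gap $P$ you single out the index $i=\lfloor f(P)/b\rfloor+1$, use $P\in F_{i-1}^+$ together with $P\notin\P_{i-1}$ to force $P\in D_{i-1}^+$, and for the reverse inclusion you derive the contradiction $j\le i-1$ and $j\ge i$ from $f(P)<ib$ and $f(P)-g(P)>(i-1)b$. The paper instead proves more than the statement requires: it first shows the triangles $F_i^-\cap D_{i-1}^+\cap G_0^+$ are pairwise disjoint, then that they are disjoint from the bands $F_i^+\cap D_i^-\cap G_b^-$, and finally asserts a partition of the whole strip $\N^n\cap G_0^+\cap G_b^-$ into these two families before concluding. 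What the paper's route buys is the extra structural information (disjointness and the partition of the strip), which is reused implicitly in Sections \ref{seccionTriangulos} and \ref{SectionCase2}; what your route buys is brevity and an explicit witness for the index $i$, and it avoids having to justify the surjectivity of the partition, which the paper in fact only asserts. One tiny point worth making explicit if you write this up: your reduction from ``no $j\in\Z$'' to ``no $j\in\N$'' is harmless because $f(P)\ge 0$, so if some negative $j$ satisfied $f(P)-g(P)\le jb\le f(P)$ then $j=0$ would as well.
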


\begin{proof}
Note that for any $P\in F_i^-\cap D_{i-1}^+\cap G_0^+,$ $0 \le g(P)<b.$ Moreover, $(F_i^-\cap D_{i-1}^+\cap G_0^+)\cap (F_j^-\cap D_{j-1}^+\cap G_0^+)=\emptyset$ for every nonnegative integers $i\neq j$. In other case, if we suppose $i<j$ and $P\in (F_i^-\cap D_{i-1}^+\cap G_0^+)\cap (F_j^-\cap D_{j-1}^+\cap G_0^+),$ the point $P$ satisfies $ib>f(P),$ $f(P)-g(P)>(j-1)b,$ and $g(P)\ge 0.$ Then, $0\ge (i-j+1)b>g(P)\ge 0,$ but it is not possible.

Assume $P\in F_i^-\cap D_{i-1}^+\cap G_0^+$ for some $i\in \N^*,$  and there exists a nonnegative integer $j\le i-1$ such that $P\in F_j^+.$ Since $f(P)-g(P)>(i-1)b$, if $jb\ge f(P)-g(P),$ $j>i-1$. So, $P\notin D_j^-$. 
Analogously, if $P\in F_i^+\cap D_{i}^-$ with $g(P)<b,$ and we suppose there exists an integer $j\ge i$ such that $P\in F_j^-,$ $P$ does not belong to $D_{j-1}^+$ ($P\in D_{j-1}^+$ implies $ib\ge f(P)-g(P)>(j-1)b$, that is, $i>j-1$). So, $\cup _{i\in \N^*} (F_i^-\cap D_{i-1}^+\cap G_0^+)\bigcap \cup _{i\in \N} (F_i^+\cap D_{i}^-\cap G_b^-)= \emptyset$ and $\N^n\cap G_0^+\cap G_b^-= \cup _{i\in \N^*} (F_i^-\cap D_{i-1}^+\cap G_0^+)\bigcup \cup _{i\in \N} (F_i^+\cap D_{i}^-\cap G_b^-).$

Since $\N^n\cap G_0^+\cap G_b^-= \cup _{i\in \N^*} (F_i^-\cap D_{i-1}^+\cap G_0^+)\bigcup \cup _{i\in \N} (F_i^+\cap D_{i}^-\cap G_b^-),$ by Theorem \ref{CharactPropAffMod}, the corollary holds.
\end{proof}

\section{Testing $\N^n$-semigroups for being proportionally modular affine semigroups. Case 1.}\label{SectionCase1}

For this section, we assume that the semigroup $S\subset \N^n$ satisfies that for every $i\in [n]$, $e_i\notin S$ and $S_i$ is a proportionally modular numerical semigroup.

\begin{proposition}\label{CasoBanda}
$S$ is a proportionally modular $\N^n$-semigroup with $b>f_i>g_i>0$ for all $i\in[n]$ if and only if there exists a set $L=\{[p_1,q_1],\ldots, [p_n,q_n]\}$ where $S_i=\CaS([p_i,q_i])$ for all $i\in [n],$ and $S=\bigcup _{i\in \N}i\P_L\cap \N^n.$
\end{proposition}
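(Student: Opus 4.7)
The plan is to prove the proposition by converting between the algebraic data $(b,f,g)$ defining the modular inequality and the geometric data $L=\{[p_i,q_i]\}$, using Lemma \ref{InPropAffMod} to identify $S$ with a union of polyhedral slabs $F_i^+\cap D_i^-$.

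For the forward direction, assume $S$ is proportionally modular with $b>f_i>g_i>0$. I would set $p_i:=b/f_i$ and $q_i:=b/(f_i-g_i)$. The hypothesis $b>f_i$ gives $p_i>1$ and $f_i>g_i>0$ gives $q_i>p_i>0$, so the intervals are well defined; Lemma 5.9 (applied on each coordinate axis) yields $S_i=\CaS([p_i,q_i])$. Now observe that the hyperplanes defining $i\P_L$ from Equation \eqref{eq_hs} are, after clearing denominators, $h_{1iL}(x)=0 \Leftrightarrow f(x)=ib$ and $h_{2iL}(x)=0 \Leftrightarrow (f-g)(x)=ib$; hence $H_{1iL}^+=F_i^+$ and $H_{2iL}^-=D_i^-$. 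Since $i\P_L=\R^n_\ge\cap H_{1iL}^+\cap H_{2iL}^-$ and $\N^n\subset\R^n_\ge$, this identifies $i\P_L\cap\N^n$ with $(F_i^+\cap D_i^-)\cap \N^n$. Lemma \ref{InPropAffMod} then gives $S=\bigcup_{i\in\N} i\P_L\cap\N^n$.

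For the converse, assume $L=\{[p_i,q_i]\}$ with $S_i=\CaS([p_i,q_i])$ and $S=\bigcup_{i\in\N}i\P_L\cap\N^n$. Since each $S_i$ is a non-proper proportionally modular numerical semigroup, by Lemma \ref{IntervalToEquation} we may take $p_i,q_i\in\Q$ with $p_i>1$ and $q_i>p_i$. The plan is to produce integer coefficients by clearing denominators simultaneously: choose $b\in\N^*$ to be a common multiple making $f_i:=b/p_i\in\N$ and $d_i:=b/q_i\in\N$ for every $i$, and set $g_i:=f_i-d_i$. The inequalities $p_i>1$ and $q_i>p_i$ translate directly into $b>f_i$ and $f_i>g_i>0$. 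Defining $f(x):=\sum f_ix_i$ and $g(x):=\sum g_ix_i$, the formulas \eqref{eq_hs} show that $H_{1iL}=F_i$ and $H_{2iL}=D_i$ (with the correct half-space orientations), so $i\P_L=(F_i^+\cap D_i^-)\cap \R^n_\ge$. Applying Lemma \ref{InPropAffMod} to the inequality $f(x)\bmod b\le g(x)$ yields that the proportionally modular semigroup it defines is exactly $\bigcup_{i\in\N}(F_i^+\cap D_i^-)\cap\N^n=S$.

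The main technical point is the converse direction: ensuring a single integer $b$ exists that simultaneously makes all $b/p_i$ and $b/q_i$ integers while preserving the inequalities $b>f_i>g_i>0$. This is routine once one knows the interval endpoints can be taken rational, which is where Lemma \ref{IntervalToEquation} and the standing assumption $p_i>1$ are used. Beyond that, the proof is essentially an exercise in rewriting $h_{1iL}$ and $h_{2iL}$ from \eqref{eq_hs} in the form $f(x)=ib$ and $(f-g)(x)=ib$, so the matching of the geometric and algebraic pictures is tight and the rest follows from Lemma \ref{InPropAffMod}.
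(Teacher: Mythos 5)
Your proof is correct and follows essentially the same route as the paper: the forward direction sets $p_i=b/f_i$, $q_i=b/(f_i-g_i)$ and identifies $i\P_L$ with $(F_i^+\cap D_i^-)\cap\R^n_\ge$, and the converse clears denominators to produce a single modulus $b$ and the inequality $\sum f_ix_i \bmod b\le \sum g_ix_i$ (the paper does this via Lemma \ref{IntervalToEquation} coordinatewise followed by an lcm of the moduli, which amounts to the same construction). The only difference is that you explicitly carry out the identification of $h_{1iL}$ and $h_{2iL}$ from \eqref{eq_hs} with $f(x)-ib$ and $(f-g)(x)-ib$, a verification the paper leaves as ``an easy exercise for the lector.''
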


\begin{proof}
Assume that $S$ is a proportionally modular $\N^n$-semigroup with $f_i>g_i>0$ for all $i\in [n].$ Consider $\P_L$ the polytope $\CH (\cup_{i\in [n]} \{ p_ie_i,q_ie_i\})\subset \R^n$ where $q_i=b/(f_i-g_i)>p_i=b/f_i>1$, then $S_i=\CaS([p_i,q_i])$. Note that $\P_L$ is the set $(F_1^+\cap D_1^-)\cap \R_\ge^n$ with $F_i^+\equiv f(x)\ge ib,$ and $D_i^-\equiv (f-g)x\le ib$, and $i\P_L$ is equal to $(F_i^+\cap D_i^-)\cap \R_\ge^n.$ So, proposition holds by Theorem \ref{CharactPropAffMod}.

Now, we assume that $\CaS([p_1,q_1]),\ldots, \CaS([p_n,q_n])$ are non proper proportionally modular numerical semigroups.
By Lemma \ref{IntervalToEquation}, for each $i\in [n]$, there exist three nonnegative integers $b_i$, $a_i$ and $c_i$ with $b_i>a_i>c_i>0,$ such that $\CaS([p_i,q_i])$ is the set of nonnegative integer of the inequality $a_ix\mod b_i\le c_ix.$ Get $b=\lcm(\{b_1,\ldots ,b_n\})$ and consider the inequalities $(\frac{b}{b_i}a_i)x_i\mod b\le (\frac{b}{b_i}c_i)x_i.$ Let $S$ be the proportionally modular $\N^n$-semigroup defined by the inequality $\sum_{i=1}^n (\frac{b}{b_i}a_i)x_i \mod b \le \sum_{i=1}^n (\frac{b}{b_i}c_i)x_i.$
It is an easy exercise for the lector to prove that $\bigcup _{i\in \N}i\P_L\cap \N^n$ is the proportionally modular $\N^n$-semigroup defined by the inequality $\sum_{i=1}^n (\frac{b}{b_i}a_i)x_i \mod b \le \sum_{i=1}^n (\frac{b}{b_i}c_i)x_i$, and that $b>\frac{b}{b_i}a_i>\frac{b}{b_i}c_i$ for all $i\in[n]$.
\end{proof}

From this proposition, we obtain a procedure to check if an $\N^n$-semigroup $S$ is a proportionally modular semigroup. For that happens, the first necessary condition that $S$ must satisfy is
$S_i$ has to be a non proper proportionally modular numerical semigroup for all $i\in [n].$ If this initial condition is satisfied, we have to determinate if there exist $n$ intervals $L=\{[p_1,q_1],\ldots, [p_n,q_n]\}$ with $q_i>p_i>1$ such that $S_i=\CaS([p_i,q_i])$ and $S=\bigcup _{i\in \N}i\P_L\cap \N^n$. Let $\Lambda_S$ be the minimal generating set of $S$.

\begin{lemma}\label{lemma:bandas}
Let $S$ be an $\N^n$-semigroup with $e_i\notin S$ for all $i\in [n]$. Then, $S$ is a proportionally modular semigroup if and only if there exist $([\widetilde{p}_1,\widetilde{q}_1],\ldots, [\widetilde{p}_n,\widetilde{q}_n])\in \widetilde{L}_{S_1}\times \cdots \times \widetilde{L}_{S_n}$, and a set
$L=\{[p_1,q_1],\ldots, [p_n,q_n]\}$ such that $S_i=\CaS([p_i,q_i])$
for all $i\in [n]$ satisfying:

\begin{enumerate}\label{ecuacionesbandas}
\item $[\widetilde{p}_i,\widetilde{q}_i]\subseteq [p_i,q_i] \subset ]\widehat{p_i},\widehat{q_i}[$ for all $i\in [n]$;\label{ecuacionesbandas1}
\item $\N^n\setminus S\subset \CH ( \{0\}\cup \phi(L)\P_{L} )$;\label{inclusionhuecos}
\item for every $x\in \N^n\setminus S$ and for every $i\in [\phi(L)]$, $x\notin i\P_L$;\label{ecuacionesbandas2}
\item for every  $s\in \Lambda_S$ such that $\theta_{\widetilde{L}}(s)=0$, $\kappa_{\widehat{L}}(s)\neq 0$, and $s/m\in \P_L$ for some $m\in [\kappa_{\widehat{L}}(s)]$.\label{ecuacionesbandas3}
\end{enumerate}
\end{lemma}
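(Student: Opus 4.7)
My plan is to prove the two implications separately, using Proposition \ref{CasoBanda} as the bridge between the arithmetic notion of ``proportionally modular'' and the geometric description via $\bigcup_{i\in\N} i\P_L\cap\N^n$.

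For the forward implication, I would assume $S$ is proportionally modular and invoke Proposition \ref{CasoBanda} (possibly after normalizing the defining inequality so that $b>f_i>g_i>0$, which is permissible because $e_i\notin S$ forces $f_i>g_i$ on each axis) to obtain a set $L=\{[p_1,q_1],\ldots,[p_n,q_n]\}$ with $S_i=\CaS([p_i,q_i])$ and $S=\bigcup_{i\in\N}i\P_L\cap\N^n$. Then I verify the four conditions in turn: for (1), choose for each $i$ a minimal interval $[\widetilde p_i,\widetilde q_i]\in\widetilde L_{S_i}$ contained in $[p_i,q_i]$, and the associated maximal open interval $]\widehat p_i,\widehat q_i[$ contains $[p_i,q_i]$ by the uniqueness part of Lemma \ref{holgura}; for (2), invoke the inclusion $\N^n\setminus\bigcup_{i\in\N}i\P_L\subset\CH(\{0\}\cup\phi(L)\P_L)$ recorded in Section \ref{seccion_notacion_Nsemig}; (3) is immediate from $S=\bigcup i\P_L\cap\N^n$. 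For (4), given $s\in\Lambda_S$ with $\theta_{\widetilde L}(s)=0$, since $s\in S$ there is $i\geq 1$ with $s\in i\P_L$, i.e.\ $i\leq\sum_j s_j/p_j$ using \eqref{eq_hs}; since $\widehat p_j<p_j$ and $s\neq 0$, we obtain the strict inequality $i<\sum_j s_j/\widehat p_j$, which by definition of $\kappa_{\widehat L}$ yields $1\leq i\leq\kappa_{\widehat L}(s)$, so $m=i$ is a valid witness.

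For the reverse implication, I would set $T=\bigcup_{i\in\N}i\P_L\cap\N^n$ and, via Proposition \ref{CasoBanda}, reduce to proving $S=T$. Note first that $T$ is an affine semigroup because $\P_L$ meets every extremal ray of $\R^n_\ge$ (it contains $p_ie_i$ and $q_ie_i$ for each $i$). To show $S\subseteq T$ it is enough to prove $\Lambda_S\subseteq T$: if $\theta_{\widetilde L}(s)=1$ then $s\in i\P_{\widetilde L}\subseteq i\P_L$ since $[\widetilde p_i,\widetilde q_i]\subseteq[p_i,q_i]$ implies $\P_{\widetilde L}\subseteq\P_L$, while if $\theta_{\widetilde L}(s)=0$ condition (4) directly gives $s\in m\P_L\subseteq T$.

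The harder inclusion is $T\subseteq S$, which I prove by contradiction: suppose $P\in T\cap(\N^n\setminus S)$, so $P\in i\P_L$ for some $i\geq 1$ and, by (2), $P\in\CH(\{0\}\cup\phi(L)\P_L)$. The set of reals $r$ with $P\in r\P_L$ is, by \eqref{eq_hs}, exactly the closed interval $[\sum_j P_j/q_j,\sum_j P_j/p_j]$. The convex-hull condition says $P=\lambda y$ for some $y\in\phi(L)\P_L$ and $\lambda\in[0,1]$, i.e.\ $P\in r\P_L$ for some real $r\in[0,\phi(L)]$; hence $\sum_j P_j/q_j\leq\phi(L)$. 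Combined with $i\leq\sum_j P_j/p_j$, the interval of valid $r$ meets $[1,\phi(L)]$ in a nonempty interval that contains an integer $i_0\in[\phi(L)]$ with $P\in i_0\P_L$, contradicting condition (3). This ``integer-realization'' step (pushing the witness $i$ down to one in $[\phi(L)]$) is the main technical obstacle, and the clean resolution is precisely the observation that the set of valid scaling factors is an interval, so (2) and the existence of the integer $i$ force an integer witness $\leq\phi(L)$.
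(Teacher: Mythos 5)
Your proof is correct and follows essentially the same route as the paper: the forward direction via Proposition \ref{CasoBanda}, and the converse by identifying $S$ with $\bigcup_{i\in\N}i\P_L\cap\N^n$ through the two inclusions (gaps for $T\subseteq S$, minimal generators for $S\subseteq T$). You merely make explicit two steps the paper leaves implicit, namely the verification of the four conditions in the forward direction and the ``integer-realization'' argument showing that conditions (2) and (3) rule out membership in $i\P_L$ for \emph{all} $i$, not just $i\le\phi(L)$.
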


\begin{proof}
If $S$ is a proportionally modular $\N^n$-semigroup with $e_i\notin S$ for all $i\in [n]$, by Proposition \ref{CasoBanda}, $S=\cup_{i\in \N}(i\P_L\cap \N^n)$ where $L=\{[p_1,q_1],\ldots, [p_n,q_n]\}$ is such that $S_i=\CaS([p_i,q_i])$. So, Lemma holds.

Assume that $\P_L$ is a polytope satisfying the hypotheses of the lemma and let $S'$ be the $\N^n$-semigroup $S'=\bigcup _{i\in \N}i\P_L\cap \N^n$. Trivially, $S'\cap \langle e_i \rangle_\R$ is isomorphic to $\CaS([p_i,q_i])$ for all $i\in [n].$ By the conditions \ref{inclusionhuecos} and \ref{ecuacionesbandas2}, $\N^n\setminus S\subset\N^n\setminus S'$ and then $S'\subset S$. By the conditions \ref{inclusionhuecos} and \ref{ecuacionesbandas3}, we have that $S\subset S'$ (note that if $\theta_{\widetilde{L}}(s)=1$ for some $s\in\Lambda_S$, there exists $i\in \N$ with $s\in i\P_{\widetilde{L}}\subset i\P_{{L}}$). So, $s\in \bigcup _{i\in \N}i\P_L\cap \N^n$.
\end{proof}

In order to present an algorithm to check if an $\N^n$-semigroup is a proportionally modular semigroup,
for a set of closed intervals ${L}=\{[p_1,q_1],\ldots, [p_n,q_n]\}$ with $S_i=\CaS([p_i,q_i])$ for every $i\in [n]$,
we set a disjoint partition of the region $\mathbb{T}_{{L}}:=\CH ( \{0\}\cup \phi({L})\P_{{L}} )\cap \N^n$:
\begin{itemize}
\item $\mathbb{H}_{1{L}}=\{x\in\N^n\mid x\in H^-_{11{L}}\}$, and $\mathbb{H}_{i{L}}=H^-_{1i{L}} \cap H^+_{2(i-1){L}}\cap \N^n$ for any $i\in \{2,\ldots, \phi({L})\}$,
\item $\mathbb{S}_{i{L}}= H^+_{1i{L}} \cap H^-_{2i{L}}\cap \N^n$ for any $i\in [\phi({L})].$
\end{itemize}
Note that $\mathbb{T}_{{L}}= \{0\}\cup \bigcup _{i\in [\phi({L})]} (\mathbb{H}_{i{L}}\sqcup \mathbb{S}_{i{L}})$ and $\mathbb{S}_{i{L}}=i\P_{{L}}\cap \N^n.$

For any $i$ belonging to $[\phi({L})]$, denote by
$\Ba_{i{L}}$ the set $(\N^n\setminus S)\cap \mathbb{H}_{i{L}}$.
So, a necessary condition for $S$ to be a proportionally modular $\N^n$-semigroup for some $L=\{[p_1,q_1],\ldots, [p_n,q_n]\}$ is $\N^n\setminus S\subset \cup _{i\in \phi({L})} \Ba_{i{L}} \subset \cup _{i\in \phi(\widetilde{L})} \Ba_{i\widetilde{L}}$. In other case, $S\neq\bigcup _{i\in \N}i\P_L\cap \N^n$.

\begin{corollary}\label{corolario_caso_bandas}
Let $S$ be an $\N^n$-semigroup with $e_i\notin S$ for all $i\in [n]$. Then, $S$ is a proportionally modular semigroup if and only if for some $\widetilde{L}=\{[\widetilde{p}_1,\widetilde{q}_1],\ldots, [\widetilde{p}_n,\widetilde{q}_n]\}$ with $([\widetilde{p}_1,\widetilde{q}_1],\ldots, [\widetilde{p}_n,\widetilde{q}_n])\in \widetilde{L}_{S_1}\times \cdots \times \widetilde{L}_{S_n}$, $\N^n\setminus S\subset \cup _{i\in [\phi(\widetilde{L})]} \mathbb{H}_{i\widetilde{L}}$ and
there exists $L=\{[p_1,q_1],\ldots, [p_n,q_n]\}$ with $p_1,\ldots , p_n,$ $q_1, \ldots ,q_n \in \Q$
satisfying the following inequalities:
\begin{enumerate}
\item for all $i\in [n]$ such that $[\widetilde{p}_i,\widetilde{q}_i]$ is a half-line interval, $\widehat{p}_i<p_i\le \widetilde{p}_i<\widetilde{q}_i\le q_i$, in other case, $\widehat{p}_i<p_i\le \widetilde{p}_i<\widetilde{q}_i\le q_i< \widehat{q}_i$;
\item for all $x\in \VSet((\N^n\setminus S)\cap \mathbb{H}_{1\widetilde{L}})$, $h_{11L}(x)>0$, and for all $i\in \{2,\ldots ,\phi(\widetilde{L})\}$ and $x\in \VSet((\N^n\setminus S)\cap \mathbb{H}_{i\widetilde{L}})$, $h_{1iL}(x) >0$ and $h_{2(i-1)L}(x)<0$;

\item \label{condi4b}  for every $s\in \Lambda_S$ such that $\theta_{\widetilde{L}}(s)=0$,    $\kappa_{\widehat{L}}(s)\neq 0$ and $h_{1iL}(s/m) \le 0$ and $h_{2iL}(s/m)\ge 0$ for some $m\in [\kappa_{\widehat{L}}(s)]$.
\end{enumerate}
\end{corollary}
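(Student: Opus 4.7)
The plan is to derive this corollary from Lemma \ref{lemma:bandas} by rewriting each of its geometric conditions as an inequality in the explicit half-space equations $h_{1iL}, h_{2iL}$ introduced in (\ref{eq_hs}), after first extracting the preliminary containment $\N^n\setminus S\subset \cup_{i\in[\phi(\widetilde L)]}\mathbb{H}_{i\widetilde L}$ as a necessary condition that depends only on the finite choice $\widetilde L\in\widetilde{L}_{S_1}\times\cdots\times\widetilde{L}_{S_n}$. This preliminary inclusion is obtained by observing that $[\widetilde p_i,\widetilde q_i]\subseteq [p_i,q_i]$ gives $j\P_{\widetilde L}\subseteq j\P_L$ for every $j$, so the gap region of $L$ is contained in that of $\widetilde L$; equivalently, since also $\phi(\widetilde L)\ge\phi(L)$, every band $\mathbb{H}_{jL}$ sits inside some $\mathbb{H}_{j'\widetilde L}$, and in particular every gap of $S$ that sits in $\bigcup_{j\in[\phi(L)]}\mathbb{H}_{jL}$ (by Lemma \ref{lemma:bandas}(2)--(3)) must sit in $\bigcup_{j'\in[\phi(\widetilde L)]}\mathbb{H}_{j'\widetilde L}$.

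The translation dictionary between Lemma \ref{lemma:bandas} and the corollary is then as follows. Condition (1) of the lemma, $[\widetilde p_i,\widetilde q_i]\subseteq [p_i,q_i]\subset\;]\widehat{p}_i,\widehat{q}_i[$, is just condition (1) of the corollary once we split off the case where $[\widetilde p_i,\widetilde q_i]$ is a half-line interval: in that case Remark \ref{def_intervalos_gorro} and the construction in Lemma \ref{holgura} put $\widehat q_i=\infty$, and the upper strict inequality $q_i<\widehat q_i$ disappears. Conditions (2)--(3) of the lemma together assert that every gap of $S$ lies outside every $j\P_L$ for $j\in[\phi(L)]$; since each such gap is already localized in some $\mathbb{H}_{i\widetilde L}$ by the preliminary inclusion, it suffices to demand that the gap lie in the corresponding band $\mathbb{H}_{iL}$, which by the defining inequalities of the open half-spaces $H^{-}_{1iL}$ and $H^{+}_{2(i-1)L}$ reads $h_{1iL}(x)>0$ for $i=1$, and $h_{1iL}(x)>0$ together with $h_{2(i-1)L}(x)<0$ for $i\ge 2$. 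Because these inequalities are linear and $(\N^n\setminus S)\cap \mathbb{H}_{i\widetilde L}$ is finite, the observation recalled at the start of Section \ref{seccion_notacion_Nsemig} reduces them to inequalities on the vertex set of the convex hull, producing precisely condition (2) of the corollary. Finally, condition (4) of the lemma, $s/m\in\P_L$ for some $m\in[\kappa_{\widehat L}(s)]$, is equivalent by the half-space description $m\P_L=\R^n_\ge\cap H^{+}_{1mL}\cap H^{-}_{2mL}$ to $h_{1mL}(s/m)\le 0$ and $h_{2mL}(s/m)\ge 0$ for some such $m$ (the requirement $s/m\in\R^n_\ge$ being automatic from $s\in\N^n$), which is condition (3) of the corollary; the non-triviality $\kappa_{\widehat L}(s)\ne 0$ merely ensures that $[\kappa_{\widehat L}(s)]$ is nonempty so that an $m$ exists.

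Chaining these translations in both directions establishes the biconditional: starting from $S$ proportionally modular, Lemma \ref{lemma:bandas} provides $\widetilde L$ and $L$, and the inequality system of the corollary is read off the lemma's geometric conditions; conversely, any $L$ produced by the corollary satisfies the hypotheses of Lemma \ref{lemma:bandas} by reversing the dictionary, whence $S$ is proportionally modular.

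The main obstacle is mostly bookkeeping rather than content: one must handle the half-line interval case separately in condition (1), index the bands consistently across $L$ and $\widetilde L$ using $\phi(\widetilde L)\ge \phi(L)$, and verify that the vertex reduction is well-defined (the relevant sets are finite because $\N^n\setminus S$ is finite by the $\N^n$-semigroup hypothesis). No new geometric ideas beyond Lemma \ref{lemma:bandas} are needed; the content of the corollary is the conversion of its conditions into a finite system of polynomial inequalities whose unknowns are the rational endpoints $p_i,q_i$, which is what Algorithm \ref{algoritmo_todos_intervalos} will actually solve.
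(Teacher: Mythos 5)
Your proposal is correct and takes essentially the same route as the paper: the corollary is read off from Lemma \ref{lemma:bandas} by rewriting its conditions as the explicit inequalities of (\ref{eq_hs}) and reducing the gap conditions to the vertices of the convex hulls of the finite sets $(\N^n\setminus S)\cap \mathbb{H}_{i\widetilde{L}}$, exactly as the paper's (much terser) proof does. One minor slip: $s/m\in\P_L$ translates to $h_{11L}(s/m)\le 0$ and $h_{21L}(s/m)\ge 0$ (equivalently $h_{1mL}(s)\le 0$ and $h_{2mL}(s)\ge 0$), not to $h_{1mL}(s/m)\le 0$ as you wrote; this matches what Algorithm \ref{algoritmo_todos_intervalos} actually uses and merely mirrors the unbound index $i$ already present in the corollary's third condition.
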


\begin{proof}
The condition $\N^n\setminus S\subset \cup _{i\in [\phi(\widetilde{L})]} \mathbb{H}_{i\widetilde{L}}$ is equivalent to the condition \ref{inclusionhuecos} in Lemma \ref{lemma:bandas}. Furthermore, for every integer $i,$ the sets of inequalities appearing in the second condition are fulfilled by the rational points belonging to $\mathbb{H}_{i{L}}$, while any point that satisfies the inequalities of the third condition belongs to $i\P_L$ for some integer $i$. Then, second and third conditions of the corollary are equivalent to the conditions \ref{ecuacionesbandas2} and \ref{ecuacionesbandas3} of Lemma \ref{lemma:bandas} respectively.
\end{proof}

Algorithm \ref{algoritmo_todos_intervalos} presents a method for checking the conditions of the previous corollary. Note that some steps in this algorithm can be computed in a parallel way. Given a minimal interval $[\widetilde{p},\widetilde{q}]$, we denote by $r_{[\widetilde{p},\widetilde{q}]}$ the inequalities $\widehat{p}<p\le \widetilde{p}<\widetilde{q}\le q$ if $[p,q]$ is a half-line interval, and $\widehat{p}<p\le \widetilde{p}<\widetilde{q}\le q< \widehat{q}$ in other case.

\begin{algorithm}[h]\label{algoritmo_todos_intervalos}
	\KwIn{The minimal generating set $\Lambda_S$ and the set of gaps of $S$ an $\N^n$-semigroup.}
	\KwOut{If $S$ is proportionally modular with $e_i\notin S$ and such that $S_i$ is a proportionally modular numerical semigroup for all $i\in [n]$, a polytope $\P$ such that $S=\bigcup _{i\in \N}i\P\cap \N^n$, the empty set in other case.}

\Begin{
	\If{$e_i\in S$ or $S_i$ is not a proportionally modular numerical semigroup for some $i\in [n]$}
        {\Return $\emptyset$}

    $L\leftarrow \{[p_1,q_1],\ldots, [p_n,q_n]\}$ set of variables\;
    $\Delta\leftarrow\widetilde{L}_{S_1}\times \cdots \times \widetilde{L}_{S_n}$ (Algorithm \ref{algoritmo_intervalos_max_m_yin})\;

    \While{$\Delta\neq \emptyset$}
        {
        $\widetilde{L}\leftarrow \mbox{First}(\Delta)$\;
        $\Lambda=\{s_1,\ldots ,s_k\}\leftarrow \{s\in \Lambda_S \mid \theta_{\widetilde{L}}(s)=0\}$\;
        \ForAll{$i\in [k]$}
            {Compute $\kappa_{\widehat{L}}(s_i)$\;            }
        \If{$\prod _{j\in [k]}\kappa_{\widehat{L}}(s_j)\neq 0$ and $\N^n\setminus S\subset \cup _{i\in [\phi(\widetilde{L})]} \mathbb{H}_{i\widetilde{L}}$}
            {

            $E\leftarrow \{h_{11L}(x)>0\mid x\in \VSet((\N^n\setminus S)\cap \mathbb{H}_{1\widetilde{L}})\}$\;
            $E\leftarrow E\cup \{\{h_{1iL}(x) >0, h_{2(i-1)L}(x)<0\}\mid i\in \{2,\ldots ,\phi(\widetilde{L})\}\mbox{ and } x\in \VSet((\N^n\setminus S)\cap \mathbb{H}_{i\widetilde{L}})\}$\;

            $\Omega\leftarrow [\kappa_{\widehat{L}}(s_1)]\times \cdots \times [\kappa_{\widehat{L}}(s_k)]$\;

            \While{$\Omega\neq\emptyset$}
                {
                $(m_1,\ldots ,m_k)\leftarrow \mbox{First}(\Omega)$\;

                $F\leftarrow  \{\{h_{11L}(s_i/m_i) \le 0, h_{21L}(s_i/m_i)\ge 0\}\mid i\in [k]\} $\;

                $T \leftarrow \mbox{ Solve } \big(\cup_{i\in [n]}\{r_{[\widetilde{p}_i,\widetilde{q}_i]}\}\big)
                \bigcup E \bigcup F$ for $\{p_1,\ldots , p_n,q_1, \ldots ,q_n\}$
                \;
                \If{$(p_1,\ldots , p_n,q_1, \ldots ,q_n)\in T\cap \R^{2n}$}
                    {\Return $\P= \CH \big(\cup_{i\in [n]} \{ p_ie_i,q_ie_i\}\big)$}
                $\Omega\leftarrow \Omega\setminus \{(m_1,\ldots ,m_k)\}$\;
                }
            }
        $\Delta\leftarrow \Delta\setminus \{\widetilde{L}\}$\;
        }
	\Return $\emptyset$\;
}
\caption{Checking if an $\N^n$-semigroup $S$ with $e_i\notin S$, $\forall i \in [n],$ is a proportionally modular semigroup.}
\end{algorithm}

\begin{example}\label{ejemplo_bandas2}
Consider the set of circles in Figure \ref{ejemplo_bandas}, and let $S$ be the $\N^2$-semigroup such that $\N^2\setminus S$ is this set, and its minimal generating set is
\[
\begin{multlined}
\{(0, 8), (0, 9), (0, 10), (0, 11), (0, 12), (0, 15), (1,
7), (1, 8), (1, 9), (1, 10), (1, 11), (1, 14), (2, 6),\\
(2, 7), (2, 8), (2, 9), (2, 10), (3, 6), (3, 7), (3,
8), (3, 9), (4, 5), (4, 6), (4, 7), (4, 8), (5, 4), (5,
5),\\
 (5, 6), (5, 7), (5, 11), (6, 3), (6, 4), (6, 5),
(6, 6), (7, 3), (7, 4), (7, 5), (7, 6), (8, 2), (8, 3),
(8, 4),\\
 (8, 5), (9, 1), (9, 2), (9, 3), (9, 4), (10,
0), (10, 1), (10, 2), (10, 3), (11, 0), (11, 1), (11,
2), (12, 0),\\
 (12, 1), (13, 0), (23, 4), (24, 3), (25,
2), (26, 1), (27, 0) \}.
\end{multlined}
\]%
So, $S_1$ is minimally generated by $\{10,11,12,13,27\}$, and $S_2$ is minimally generated by $\{8,9,10,11,12,15 \}$. Using Algorithm \ref{algoritmo_intervalos_max_m_yin},
we obtain that both $S_1$ as $S_2$ are proportionally modular numerical semigroups with   $\widetilde{L}_{S_1}= \{[\frac{27}{25},\frac{10}{9}],[10,\frac{27}{2}]\}$, $\widehat{L}_{S_1}= \{]\frac{14}{13},\frac{29}{26}[,]\frac{29}{3},14[\}$,  $\widetilde{L}_{S_2}= \{[\frac{12}{11},\frac{15}{13}],[\frac{15}{2},12]\}$, and $\widehat{L}_{S_2}= \{]\frac{13}{12},\frac{7}{6}[,]7,13[\}$, respectively. Algorithm \ref{algoritmo_todos_intervalos} determines that $S$ is a proportionally modular $\N^2$-semigroup when $\widetilde{L}= \left([10,\frac{27}{2}], [\frac{15}{2},12] \right)$ by computing $p_1,q_1,p_2,q_2\in \Q$ such that $\frac{29}{3}<p_1\leq 10 < \frac{27}{2} \leq q_1< 14$, $7<p_2\leq \frac{15}{2}<12 \leq q_2< 13$, and satisfying the other inequalities in Corollary \ref{corolario_caso_bandas}. Moreover, $S$ is given by the inequality $11x+ 15y \mod  110\leq  3x+ 6y$. In Figure \ref{ejemplo_bandas}, the blue line is $g(x)=b$, the green line is $f(x)=kb$ and the red one is $f(x)-g(x)=(k-1)b$ for $k\in \N$.
\begin{figure}[h]
  \centering
\includegraphics[scale=.28]{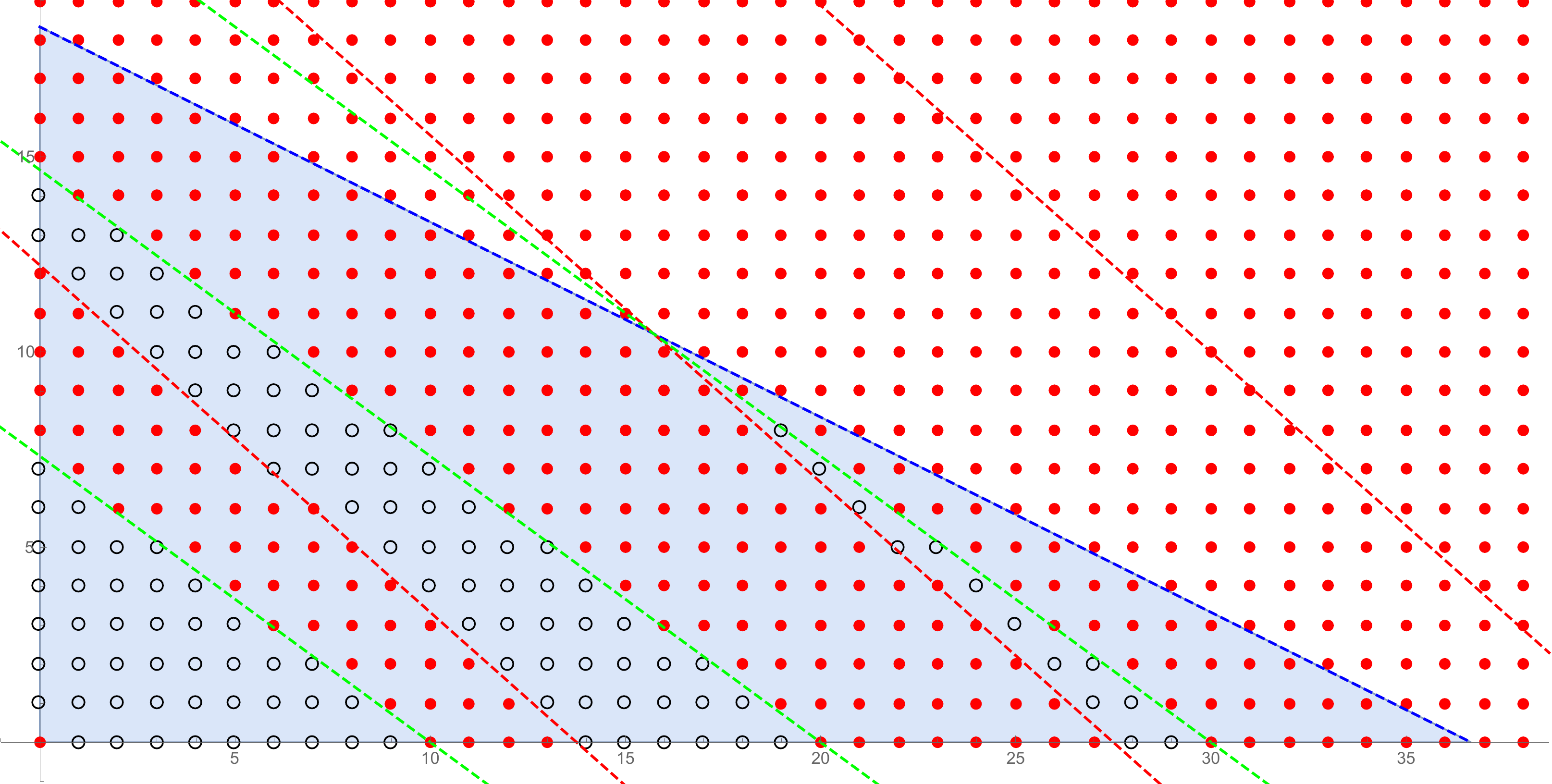}
  \caption{$\N^2$-semigroup given by $11x+ 15y\mod  110\leq  3x+6y$.}\label{ejemplo_bandas}
\end{figure}
Using our software \cite{prog_prop_mod_afines}, we can obtain the above results,
\begin{verbatim}
In[1]:= mgs = {{0, 3}, {0, 4}, {1, 1}, {2, 1}, {4, 0}, {5, 0}, {5, 2},
        {6, 0}, {7, 0}};
In[2]:= gaps = {{0, 1, 0}, {0, 2, 0}, {0, 2, 1}, {0, 5, 0}, {1, 0, 0}, {1, 2, 0},
        {1, 3, 0}, {1, 6, 0}, {2, 0, 0}, {2, 0, 1}, {2, 3, 0}, {3, 0, 0},
        {3, 1, 0}, {3, 4, 0}, {4, 1, 0}};

In[3]:= IsNnProportionallyModularSemigroup[mgs, gaps]

Out[3]= {{163835/16384, 0}, {28367/2048, 0}, {0, 931/128}, {0, 1553/128}}
\end{verbatim}
Note that $p_1= 163835/16384$, $q_1= 28367/2048$, $p_2= 931/128$ and $q_2= 1553/128$.

\end{example}

\section{Some properties of proportionally modular $\N^2$-semigroups}\label{seccionTriangulos}

In order to give an algorithm to check if an $\N^n$-semigroup is a proportionally modular semigroup when some $e_i$ belongs to it, we study the two dimensional case in depth.
Let $S\subset \N^2$ be the non proper proportionally modular semigroup given by $f_1x_1+f_2x_2\mod b \le g_1x_1+g_2x_2$. Again, we assume $f_1=f_1\mod b$, $f_2=f_2\mod b$ and $g_1,g_2>0.$

Without loss of generality, we assume, for example, that $f_1>g_1$ but $g_2\ge f_2$, then $e_2$ belongs to $S$, and for all $x\in \N^2\setminus S,$ $x-e_2\notin S$. Note that if one particularize Theorem \ref{CharactPropAffMod} and Corollary \ref{GAPS} to the two dimensional case, the sets $F_i^-\cap D_{i-1}^+\cap G_0^+$ are triangles with parallel edges, and their bases are the intervals given by $F_i^-\cap D_{i-1}^+\cap \langle e_1\rangle _\R$.

For any integer $k\in (0,f_1/g_1)$, we denote by
$A_{k}$ the point $\frac{b}{f_1g_2-f_2g_1}(kg_2-f_2,f_1-kg_1)=\{g_1x_1+g_2x_2=b\}\cap \{f_1x_1+f_2x_2=kb\}\in\Q^2_\ge$.
The triangle $\T_{k}$ is the convex hull of the vertex set $\{(kb/f_1,0), ((k-1)b/(f_1-g_1),0),A_{k}\},$ and $\ddot{\T}_{k}$ is $\T_{k}\setminus \{\overline{((k-1)b/(f_1-g_1),0) A_{k}}, \overline{(kb/f_1,0) A_{k}}\}$.
This situation is illustrated in Figure \ref{figure_triangles}; the blue line is $g(x)=b$, the green line is $f(x)=kb$ and the red one is $f(x)-g(x)=(k-1)b$ for $k\in \N$.
\begin{figure}[h]
  \centering
\includegraphics[scale=.47]{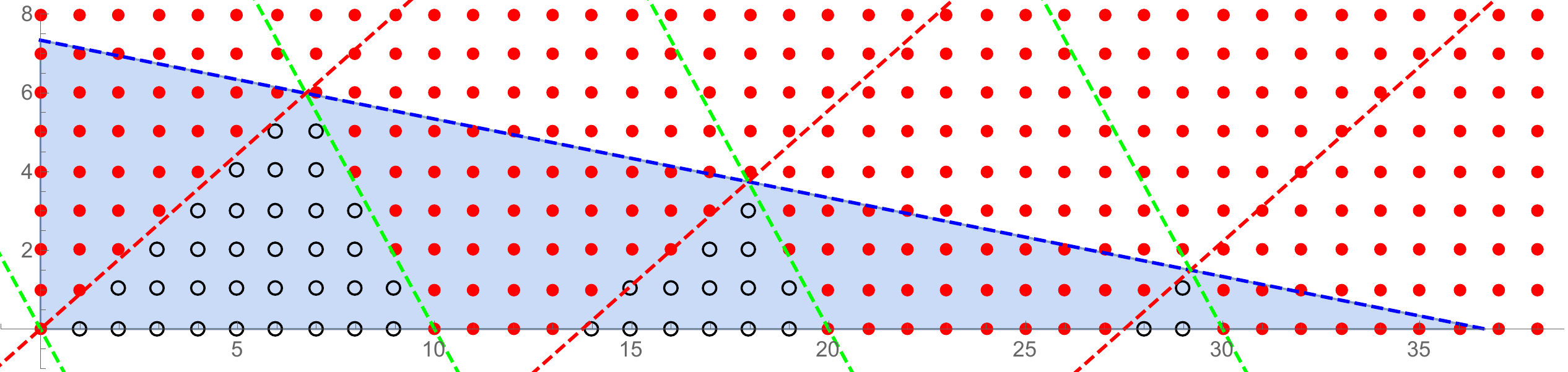}
  \caption{$\N^2$-semigroup given by $11x+ 6y\mod  110\leq  3x+15y$.}\label{figure_triangles}
\end{figure}

So, a proportionally modular semigroup $S$ with $f_1>g_1$ and $g_2\ge f_2$ can be characterized by a finite set of triangles satisfying some conditions.

\begin{lemma}\label{lemma_triangles}
Let $S\neq \N^2$ be a proportionally modular $\N^2$-semigroup $f_1x_1+f_2x_2\mod b\leq g_1x_1+g_2x_2,$ such that $1\in S_2$. For every $\alpha\in \N^2$,
$\alpha\in S$ if and only if $\alpha \notin \cup _{k\in [\lfloor f_1/g_1 \rfloor]} \ddot{\T}_{k}.$
\end{lemma}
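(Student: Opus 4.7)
The plan is to reduce the lemma to Corollary~\ref{GAPS} and then show that, in the two-dimensional setting, the half-plane intersections $F_k^-\cap D_{k-1}^+$ are precisely the open-sided triangles $\ddot{\T}_k$ for $1\le k\le f_1/g_1$ and are empty for $k>f_1/g_1$.

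First, since the standing hypotheses of the section give $g_1,g_2>0$, we have $\N^2\subset G_0^+$, so Corollary~\ref{GAPS} specialises to $\N^2\setminus S=\bigcup_{k\in \N^*}(F_k^-\cap D_{k-1}^+)\cap \N^2$. Next, fix $k$ with $1\le k\le f_1/g_1$ and set $R_k:=F_k^-\cap D_{k-1}^+\cap \R^2_\ge$. The three lines $F_k$, $D_{k-1}$ and the $x_1$-axis bound $R_k$; a direct computation (or the verifications following the definition of $A_k$) identifies their pairwise intersections with $V_1=(kb/f_1,0)$, $V_2=((k-1)b/(f_1-g_1),0)$ and $V_3=A_k$, and the inequality $k\le f_1/g_1$ forces $V_2\le V_1$ on the $x_1$-axis and the second coordinate of $A_k$ to be non-negative (here one uses that $f_1g_2-f_2g_1>0$, which follows from $f_1>g_1$ and $g_2\ge f_2$ together with the non-degeneracy of the defining inequality). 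Thus the closure of $R_k$ in $\R^2_\ge$ is the triangle $\T_k$. Since $F_k^-$ and $D_{k-1}^+$ are strict, the slanted edges $\overline{V_1V_3}\subset F_k$ and $\overline{V_2V_3}\subset D_{k-1}$ are removed, while the open base on the $x_1$-axis is retained, giving $R_k=\ddot{\T}_k$.

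Third, for $k>f_1/g_1$ the region $R_k$ is empty: on the $x_1$-axis the inequality $kg_1>f_1$ reverses the order to $kb/f_1<(k-1)b/(f_1-g_1)$, and as $x_2$ increases the defining bound of $F_k^-$ moves left (because $f_2\ge 0$) while that of $D_{k-1}^+$ moves right (because $g_2-f_2\ge 0$, using $1\in S_2$), so the two half-planes never meet in $\R^2_\ge$. Putting the three steps together gives
\[
\N^2\setminus S \;=\; \bigcup_{k\in[\lfloor f_1/g_1\rfloor]} \ddot{\T}_k\cap\N^2,
\]
which is the statement of the lemma.

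The delicate step is the second one: correctly tracking which boundary pieces of $\T_k$ survive the strict inequalities in $F_k^-$ and $D_{k-1}^+$, and checking that the formula for $A_k$ really is the common point of the three lines $F_k$, $D_{k-1}$ and $G_1$ so that the three-line configuration inside $\R^2_\ge$ closes into a genuine (possibly degenerate when $f_1/g_1\in\Z$ and $k=f_1/g_1$) triangle; once this geometric picture is pinned down, the rest of the argument is routine.
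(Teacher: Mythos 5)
Your proof is correct and follows essentially the same route as the paper: specialize Corollary \ref{GAPS} to the plane and identify each region $F_k^-\cap D_{k-1}^+\cap\R^2_\ge$ with the triangle $\ddot{\T}_k$ via its vertices on the $x_1$-axis and at $A_k$. You simply make explicit two points the paper leaves implicit — the bookkeeping of which edges survive the strict inequalities, and the emptiness of these regions for $k>f_1/g_1$, which justifies truncating the union at $\lfloor f_1/g_1\rfloor$.
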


\begin{proof}
By Corollary \ref{GAPS}, if $S$ is a proportionally modular semigroup, $(\N^2\setminus S)\cap G_0^+=\cup _{i\in \N\setminus\{0\}} (F_i^-\cap D_{i-1}^+\cap G_0^+)\cap \N^n,$ and $F_i^-\cap G_0^+=\overline{\left(\frac{kb}{f_1},0\right) A_{k}}$ and $D_{i-1}^+\cap G_0^+=\overline{\left(\frac{(k-1)b}{f_1-g_1},0\right) A_{k}}$. These are just the edges of the triangle $\T_{k}$.
\end{proof}

Note that the triangles $\T_i$ can be also determined by the points $(p,0),$ $(q,0)$ and two vectors $(\mu_1,\mu_2)$ and $(-\nu_1,\nu_2)$, with $\mu_1,\nu_1\in [0,1),$ $\mu_2,\nu_2\in (0,1]$, and $\mu_1+\mu_2=\nu_1+\nu_2=1.$

\begin{remark}\label{FromTriangleToEquation}
Given a triangle with vertex set $\T=\{(0,0),(p,0),(\gamma_1,\gamma_2)\}\subset \Q^2_\ge $ such that $0\le \gamma_1\le p,$ and one other point $(q,0)\in \Q^2_\ge$ with $q>p$, a proportionally modular $\N^2$-semigroup can be constructed by using the following method:
the line containing $\{(p,0),(\gamma_1,\gamma_2)\}$ is defined by the equation $\gamma_2x_1+(p-\gamma_1)x_2=p\gamma_2$, and
consider the line $(q-p)\gamma_2x_1+\left( pq-\gamma_1(q-p) \right)x_2=pq\gamma_2$.
Let $r_1,r_2$ be the minimum nonnegative integers such that $\{r_1\gamma_2,r_1(p-\gamma_1),r_1p\gamma_2, r_2(q-p)\gamma_2,r_2\left( pq-\gamma_1(q-p) \right),r_2pq\gamma_2  \}\subset \N,$ and $b=\lcm(\{r_1p\gamma_2,r_2pq\gamma_2\}).$ So, the semigroup given by the inequality
\begin{equation}\label{eq_from_triangle}
\frac{br_1\gamma_2}{r_1p\gamma_2}\, x_1 + \frac{b r_1(p-\gamma_1)}{r_1p\gamma_2}\,x_2 \mod b \le \frac{b r_2(q-p)\gamma_2}{r_2pq\gamma_2}\, x_1 + \frac{br_2\left( pq-\gamma_1(q-p) \right)}{r_2pq\gamma_2}\,x_2 ,
\end{equation}
satisfies Lemma \ref{lemma_triangles} for $\T_{1}=\T.$
\end{remark}

\begin{example}\label{ejemplo_triangles}
Consider $S$ the $\N^2$-semigroup showed in Figure \ref{figure_triangles}, that is, the nonnegative integer solutions of the modular inequality $11x+ 6y\mod  110\leq  3x+15y$. In this example, the vertex set of the triangle $\T_1=\T$ is $\{(0,0),(10,0),(\frac{300}{49},\frac{880}{174})\}$. The vertex sets of  $T_2$ and $T_3$ are $\{ (\frac{55}{4}, 0), (20,0), (\frac{880}{49},\frac{550}{147})\}$ and $\{ (\frac{55}{2}, 0), (30,0), (\frac{1430}{49}, \frac{220}{147}) \}$, respectively. Besides, the vectors $(\mu_1,\mu_2)$ and $(-\nu_1,\nu_2)$ determining the triangles $\T_i$ are $(\mu_{1,3},\mu_{2,3})=(\frac{9}{17},\frac{8}{17})$ and $(-\nu_{1,3},\nu_{2,3})=(-\frac{6}{17},\frac{11}{17})$, respectively.
\end{example}

\section{Testing $\N^n$-semigroups for being proportionally modular affine semigroups. Case 2.}\label{SectionCase2}

In this section, $\N^n$-semigroups containing some $e_i$ are considered. So, we assume $S$ is an $\N^n$-semigroup, any $L=\{[p_1,q_1],\ldots , [p_t,q_t]\}$ set of closed intervals satisfies that $\phi(L)=\phi([p_t,q_t])$, $S_i\neq \N$ is a proportionally modular numerical semigroup for all $i\in [t]$, and $S_i=\N$ for all $i\in \{t+1,\ldots ,n\}$. Since $S$ is a semigroup, for every $x\in \N^n\setminus S$ and $i\in \{t+1,\ldots ,n\},$  $x-e_i\notin S.$ We also consider the vectors
$\mu_{1,i}e_t+\mu_{2,i}e_i$ and $-\nu_{1,i}e_t+\nu_{2,i}e_i$ where $\mu_{1,i},\nu_{1,i}\in [0,1),$ $\mu_{2,i},\nu_{2,i}\in (0,1]$, and $\mu_{1,i}+\mu_{2,i}=\nu_{1,i}+\nu_{2,i}=1$ for $i\in \{t+1,\ldots ,n\}$. We define by $S^d$ and $S^u$ the sets $\sigma_{[t]}(S)\equiv S\cap \langle e_1,\ldots ,e_t\rangle _\R$ and $\{(\alpha_1,\ldots ,\alpha _n)\in S\mid \sum_{i=t+1}^n\alpha_i\neq 0\}$, respectively. Note that $S^d$ is an $\N^t$-semigroup and $S=(S^d\times \{0\}^{n-t+1})\cup S^u.$

In section \ref{SectionCase1}, we define several objects for a given set $L$ including $n$ closed intervals, but here $L$ only has $t$ elements (note that $n>t$). In order to not include so much notations, we consider those objects defined over the $\N^t$-semigroup $S^d.$

If $S$ is a proportionally modular $\N^n$-semigroup defined by the inequality $f(x)\mod b\leq g(x)$, above conditions mean that $b>f_i>g_i>0$ for all $i\in[t]$, and $g_i\ge f_i$ and $g_i>0$ for all $i\in\{t+1,\ldots ,n\}.$
By Lemma \ref{lemma_triangles} and Remark \ref{FromTriangleToEquation}, fixed $i\in [t]$ and $j\in \{t+1,\ldots ,n\}$, the semigroup $S\cap \langle e_i,e_j\rangle _\R$ is equivalent to an $\N^2$-semigroup determinated by a triangle $\T_{ij}$.
So, by Theorem \ref{CharactPropAffMod} and Corollary \ref{GAPS}, the hyperplanes defining $S$ are determined by the points $p_1e_1,\ldots ,p_te_t,q_1e_1,\ldots ,q_te_t$ (suppose $S_i=\CaS([p_i,q_i])$ for $i\in [t]$) and the edges of the triangles $\T_{t\,(t+1)},\ldots ,\T_{t\,n}$, that is, the hyperplanes are fixed by their intersections with the planes $\langle e_t,e_j\rangle _\R$ for any $j\in \{t+1,\ldots ,n\}$. Moreover, the hyperplane $F_i$ is given by the points $ip_1e_1,\ldots ,ip_te_t$ and the vectors $-\nu_{1,j}e_t+\nu_{2,j}e_j$, and $D_i$ by the points $iq_1e_1,\ldots ,iq_te_t$ and $\mu_{1,j}e_t+\mu_{2,j}e_j$, with $j\in\{t+1,\ldots ,n\}.$ Note that these datum are enough to determine a hyperplane in $\N^n.$

For generalizing the two dimensional case studied in section \ref{seccionTriangulos},
for any $i$ in $[\phi(L)]$, denote by $\Pr_{iL}$ the set
$\{ (\alpha_1,\ldots \alpha _t,\beta_{t+1},\ldots ,\beta_n)\in \R_\ge^n \mid \alpha\in \mathbb{H}_{i{L}}\}$,
and by
\begin{itemize}
\item $\overline{\Pr}^+_{iL}= \{\alpha\in S^u \mid \alpha-e_j \in \Pr_{iL}\setminus S \text{ for some } j\in [t]\}$,
\item $\overline{\Pr}^-_{iL}= \{\alpha\in S^u \mid \alpha+e_j \in \Pr_{iL}\setminus S \text{ for some } j\in [t]\}$,
\item $\overline{\Pr}^*_{iL}= \{\alpha \in S^u \setminus (\overline{\Pr}^+_{iL} \cup \overline{\Pr}^-_{iL})\mid \alpha-e_j\in \Pr_{iL}\setminus S \text{ for some } j\in \{t+1,\ldots ,n\}\}.$
\end{itemize}
Note that $\sigma_{[t]} (\Pr_{iL})$ is equal to $\mathbb{H}_{i{L}}$ for any $i\in [\phi(L)]$. So, three necessary conditions for $S$ to be a proportionally modular $\N^n$-semigroup given by $L=\{[p_1,q_1],\ldots [p_t,q_t]\}$ are $\N^n\setminus S\subset \cup _{i\in [\phi(L)]} \Pr_{iL}\subset \cup _{i\in [\phi(L)]} \Pr_{i\widetilde{L}}$, for all natural vector $\alpha\in \Pr_{iL}\setminus S$, $\alpha-e_j\notin S$ for every $j\in \{t+1,\ldots ,n\},$  and $(\alpha,\beta)\in S$ for all $(\alpha,\beta)\in (\cup _{i\in \N}\P_L\cap \N^{t}) \times \N^{n-t}.$

For the 3-dimensional case, Figure \ref{ejemplo3D} shows the geometrical arrangement of the case solved in this section.

\begin{theorem}\label{casoNnprisma}
Let $S$ be an $\N^n$-semigroup such that $S_i$ is a non proper proportionally modular numerical semigroup for all $i\in [t]$ and $S_i=\N$ for all $i\in \{t+1,\ldots ,n\}.$
Then, $S$ is a proportionally modular semigroup if and only if for some $([\widetilde{p}_1,\widetilde{q}_1],\ldots, [\widetilde{p}_t,\widetilde{q}_t])\in \widetilde{L}_{S_1}\times \cdots \times \widetilde{L}_{S_t}$, there exist $L=\{[p_1,q_1],\ldots, [p_t,q_t]\}$ with $p_1,\ldots , p_t,q_1, \ldots ,q_t\in \Q$,
and $\mu_{1,t+1},\mu_{2,t+1},\ldots ,\mu_{1,n},\mu_{2,n},$ $\nu_{1,t+1},\nu_{2,t+1},\ldots ,\nu_{1,n},\nu_{2,n} \in \Q$
satisfying the following conditions:
\begin{enumerate}
\item \label{condicion1}$\N^n\setminus S\subset \cup_{i\in [\phi(\widetilde{L})]}\Pr_{i\widetilde{L}}$;\label{inclusion_huecos}
\item \label{condicion2} for all $i\in [t]$ such that $[\widetilde{p}_i,\widetilde{q}_i]$ is a half-line interval, $\widehat{p}_i<p_i\le \widetilde{p}_i<\widetilde{q}_i\le q_i$, in other case, $\widehat{p}_i<p_i\le \widetilde{p}_i<\widetilde{q}_i\le q_i< \widehat{q}_i$;
\item \label{condicion3} for all $x\in \VSet((\N^t\setminus S^d)\cap \mathbb{H}_{1\widetilde{L}})$, $h_{11L}(x)>0$, and for all $i\in \{2,\ldots ,\phi(\widetilde{L})\}$ and $x\in \VSet( (\N^t\setminus S^d)\cap \mathbb{H}_{i\widetilde{L}})$, $h_{1iL}(x) >0$ and $h_{2(i-1)L}(x)<0$;

\item \label{condicion4} for every $s\in \Lambda_S\cap (S^d\times \{0\}^{n-t+1})$ such that $\theta_{\widetilde{L}}(s)=0$,    $\kappa_{\widehat{L}}(s)\neq 0$ and $h_{1iL}(\sigma_{[t]}(s)/m) \le 0$ and $h_{2iL}(\sigma_{[t]}(s)/m)\ge 0$ for some $m\in [\kappa_{\widehat{L}}(s)]$;

\item \label{condicion5} $\mu_{1,i},\nu_{1,i}\in [0,1),$ $\mu_{2,i},\nu_{2,i}\in (0,1]$, and $\mu_{1,i}+\mu_{2,i}=\nu_{1,i}+\nu_{2,i}=1$ for every $i\in  \{t+1,\ldots ,n\}$;
\item \label{condicion6} for all $i\in [\phi(\widetilde{L})]$, and for every
$\alpha\in \VSet(\{x\in \Pr_{i\widetilde{L}}\cap (\N^n\setminus S)\mid \sum_{j=t+1}^n x_j\neq 0\})$, $\beta \in \overline{\Pr}^+_{i\widetilde{L}},$ $\gamma\in \overline{\Pr}^-_{i\widetilde{L}}$ and $\delta \in \overline{\Pr}^*_{i\widetilde{L}}:$
\begin{enumerate}\label{equations}
    \item \label{condicion6_1} $\tau_{1(i-1)L}(\alpha)< 0$ and $\tau_{2iL}(\alpha)>0$,\label{eq1}
    \item \label{condicion6_2} $\tau_{1(i-1)L}(\gamma)\ge 0$,
    \item \label{condicion6_3} $\tau_{2iL}(\beta)\le 0$,
    \item \label{condicion6_4} $\tau_{1(i-1)L}(\delta)\ge 0$ and/or $\tau_{2iL}(\delta)\le 0$.
\end{enumerate}
\end{enumerate}

\end{theorem}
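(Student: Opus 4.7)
\medskip

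The plan is to parallel the proof of Corollary \ref{corolario_caso_bandas}, but leveraging the two-dimensional analysis of Section \ref{seccionTriangulos} to handle the extra coordinates $e_{t+1},\ldots,e_n$ that lie in $S$. For the forward direction, suppose $S$ is defined by $f(x)\mod b \le g(x)$. The assumptions $S_i\neq \N$ is a proportionally modular numerical semigroup for $i\in[t]$ and $S_i=\N$ for $i\in\{t+1,\ldots,n\}$ force $b>f_i>g_i>0$ for $i\in[t]$ and $g_i\ge f_i$ for $i>t$. Setting $p_i=b/f_i$, $q_i=b/(f_i-g_i)$ for $i\in[t]$ and reading off the normalized direction vectors $\mu_{1,j}e_t+\mu_{2,j}e_j$, $-\nu_{1,j}e_t+\nu_{2,j}e_j$ from the triangles $\T_{t\,j}=S\cap\langle e_t,e_j\rangle_\R$ via Lemma \ref{lemma_triangles} and Remark \ref{FromTriangleToEquation}, the hyperplanes $F_i$ and $D_i$ are exactly the ones whose linear equations are $\tau_{1(i-1)L}$ and $\tau_{2iL}$ of (\ref{eqtaus}). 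Condition (\ref{condicion5}) is then just the normalization, and conditions (\ref{condicion1})--(\ref{condicion4}) hold by applying Corollary \ref{corolario_caso_bandas} to the induced $\N^t$-semigroup $S^d$ (noting that $\sigma_{[t]}(\Pr_{iL})=\mathbb{H}_{i L}$).

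For condition (\ref{condicion6}) in the forward direction, I would use Lemma \ref{InPropAffMod} together with Corollary \ref{GAPS}. A vertex $\alpha$ of a gap region in $\Pr_{iL}$ sits strictly between $F_{i-1}$ and $F_i$ and strictly on the ``wrong side'' of $D_{i-1}$, so $\tau_{1(i-1)L}(\alpha)<0$ and $\tau_{2iL}(\alpha)>0$, giving (\ref{condicion6_1}). For $\beta\in\overline{\Pr}^+_{iL}$, the neighbour $\beta-e_j$ (with $j\le t$) lies in $\Pr_{iL}\setminus S$, so $\beta$ must have crossed the right-hand hyperplane $D_i$ (equation $\tau_{2iL}=0$), yielding $\tau_{2iL}(\beta)\le 0$; this is (\ref{condicion6_3}). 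Condition (\ref{condicion6_2}) is the mirror argument for $\overline{\Pr}^-_{iL}$ across $F_{i-1}$, and (\ref{condicion6_4}) covers the case of a neighbour in direction $e_j$, $j>t$, where escape can happen across either bounding hyperplane.

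For the converse, given data satisfying (\ref{condicion1})--(\ref{condicion6}), I would construct the inequality $f(x)\mod b\le g(x)$ by clearing denominators in the linear forms $\tau_{11L}$ and $\tau_{21L}$ (extending the construction of Remark \ref{FromTriangleToEquation} to $n$ variables: the coefficients of $x_j$ for $j\in[t]$ come from the reciprocals $1/p_j$, $1/q_j$, and the coefficients of $x_j$ for $j>t$ come from $\mu_{1,j}/\mu_{2,j}$ and $\nu_{1,j}/\nu_{2,j}$). Call $S'$ the resulting proportionally modular semigroup; by Lemma \ref{InPropAffMod} the membership in $S'$ is governed exactly by the polyhedra $\bigcup_{i\in\N}\P_i\cap \N^n$ whose bounding hyperplanes are $F_i\equiv \tau_{1iL}=0$ and $D_i\equiv \tau_{2iL}=0$. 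The inclusion $S^d\times\{0\}^{n-t}\subset S'$ and the reverse inclusion on this slice follow from conditions (\ref{condicion1})--(\ref{condicion4}) via Corollary \ref{corolario_caso_bandas}. For points with a nonzero last $n-t$ coordinates, condition (\ref{condicion6_1}) guarantees that the gaps of $S$ lying in the prisms $\Pr_{i\widetilde{L}}$ avoid every $\P_j$, while (\ref{condicion6_2})--(\ref{condicion6_4}) certify that the boundary elements of $S$ on the prism walls are inside some $\P_j$; outside the prisms there is nothing to check, because (\ref{condicion1}) and the definition of the prisms cover $\N^n\setminus S$.

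The main obstacle I anticipate is the bookkeeping in step (\ref{condicion6}): one must check that the finitely many vertex/boundary conditions on $\VSet(\cdot)$, $\overline{\Pr}^+$, $\overline{\Pr}^-$ and $\overline{\Pr}^*$ are genuinely equivalent to \emph{every} integer point in the prism being correctly classified, not just the extremal ones. The saving feature is convexity: each $\Pr_{iL}\cap(\N^n\setminus S)$ and its neighbour sets lie in a common half-space relative to each candidate hyperplane iff their convex-hull vertices do, so checking vertices suffices. Handling the strict/non-strict dichotomy carefully (open half-spaces for vertices of gaps versus closed ones for boundary elements in $S$) is the subtle point, and exactly mirrors the open/closed boundary distinction between $\T_k$ and $\ddot{\T}_k$ in Lemma \ref{lemma_triangles}.
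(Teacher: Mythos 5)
Your overall strategy is the same as the paper's: the forward implication combines Corollary~\ref{corolario_caso_bandas} applied to $S^d$ with Theorem~\ref{CharactPropAffMod}, Corollary~\ref{GAPS} and the triangle description of $S\cap \langle e_t,e_j\rangle_\R$ from Section~\ref{seccionTriangulos}, and the converse builds the candidate semigroup $S'=\cup_{i\in\N}(F_i^+\cap D_i^-)\cap\N^n$ from the hyperplanes $\tau_{1iL}(x)=0$ and $\tau_{2iL}(x)=0$. (A small slip: with the normalization of~(\ref{eqtaus}), $F_i$ is the hyperplane $\tau_{2iL}(x)=0$, built from the $p_j$'s and the $\nu$'s, while $D_i$ is $\tau_{1iL}(x)=0$; your pairing is reversed, although the inequalities you then derive for condition~\ref{condicion6} match the statement.)

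There is, however, a genuine gap in your converse, exactly at the point you dismiss as ``bookkeeping''. Conditions~\ref{condicion6_2}--\ref{condicion6_4} constrain only the elements of $\overline{\Pr}^+_{i\widetilde{L}}\cup\overline{\Pr}^-_{i\widetilde{L}}\cup\overline{\Pr}^*_{i\widetilde{L}}$, i.e.\ the elements of $S^u$ \emph{adjacent} to a gap. But there are elements $\alpha\in S^u$ with $\sigma_{[t]}(\alpha)\notin S^d$ (hence $\alpha\in\Pr_{i\widetilde{L}}$ for some $i$) that belong to none of these three sets --- for instance points far above the last gap of their column in a direction $e_j$ with $j>t$ --- and your case split (``gaps'', ``boundary elements'', ``outside the prisms'') never shows that such $\alpha$ lie in $S'$; they are neither gaps, nor boundary elements, nor outside the prisms. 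Convexity does not rescue this, since such $\alpha$ need not lie in the convex hull of the boundary elements. What is needed, and what the paper supplies, is a monotonicity argument: choose the gap $\beta$ with $\sigma_{[t]}(\beta)=\sigma_{[t]}(\alpha)$, $\beta+e_j\in S^u$ for some $j>t$, and $\pi_{\{t+1,\ldots,n\}}(\alpha-\beta-e_j)\ge 0$; condition~\ref{condicion6} applied to $\beta+e_j$ yields $\tau_{1(i-1)L}(\beta+e_j)\ge 0$ or $\tau_{2iL}(\beta+e_j)\le 0$, and because by condition~\ref{condicion5} the coefficients of $x_l$ for $l>t$ in $\tau_{1(i-1)L}$ (resp.\ $\tau_{2iL}$) are nonnegative (resp.\ nonpositive), the relevant inequality propagates from $\beta+e_j$ up to $\alpha$, placing $\alpha$ in $(F_{i-1}^+\cap D_{i-1}^-)\cup(F_i^+\cap D_i^-)\subset S'$. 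Without this step the inclusion $S^u\subset S'^u$, and hence $S\subset S'$, is unproved.
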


\begin{proof}
Assume $S$ is proportionally modular $\N^n$-semigroup such that $S_i=\CaS([p_i,q_i])$ for all $i\in [t]$ and $S_i=\N$ for all $i\in \{t+1,\ldots ,n\}.$ By Corollary \ref{corolario_caso_bandas}, the conditions \ref{condicion2}, \ref{condicion3} and \ref{condicion4} hold. Besides, since $\N^t\setminus S^d=\cup _{i\in [\phi(L)]} \sigma _{[t]}(\Pr_{iL})$, $\N^n\setminus S\subset \cup _{i\in [\phi(L)]} \Pr_{iL}\subset \cup _{i\in [\phi(L)]} \Pr_{i\widetilde{L}}$.

By Theorem \ref{CharactPropAffMod} and Corollary \ref{GAPS}, there exist two families of half-spaces $\{F_i^+\}_{i\in \N}$ and $\{D_i^-\}_{i\in \N},$ where $F_i^+\equiv f(x)\ge ib$ and $D_i^-\equiv d(x)\le ib$, such that $S= \cup _{i\in \N} (F_i^+\cap D_i^-)\cap \N^n$ and $(\N^n\setminus S)\cap G_0^+=\cup _{i\in \N^*} (F_i^-\cap D_{i-1}^+\cap G_0^+)\cap \N^n.$ Let $\T_{t\,i}$ be the triangle $(F_1^-\cap D_{0}^+\cap G_0^+)\cap \langle e_t,e_i\rangle _\R$ with $i\in \{t+1,\ldots ,n\}.$ As in Remark \ref{FromTriangleToEquation}, for each triangle $\T_{t\,i}$, we fix the vectors $-\nu_{1,i}e_t+\nu_{2,i}e_i$ and $\mu_{1,i}e_t+\mu_{2,i}e_i$ satisfying $\mu_{1,i},\nu_{1,i}\in [0,1),$ $\mu_{2,i},\nu_{2,i}\in (0,1]$, and $\mu_{1,i}+\mu_{2,i}=\nu_{1,i}+\nu_{2,i}=1$. So, $F_i$ is the hyperplane containing the points $\{ p_1e_1,\ldots ,p_te_t\}$ and the vectors $\{-\nu_{1,(t+1)}e_t+\nu_{2,(t+1)}e_{t+1}, \ldots ,-\nu_{1,n}e_t+\nu_{2,n}e_n \},$ and $D_i$ contains $\{ q_1e_1,\ldots ,q_te_t\}$ and $\{\mu_{1,(t+1)}e_t+\mu_{2,(t+1)}e_{t+1}, \ldots ,\mu_{1,n}e_t+\mu_{2,n}e_n \}$. Then, $F_i$ is equal to the hyperplane defined by $\tau_{2iL}(x)=0$ and $D_i\equiv\tau_{1iL}(x)=0$. Furthermore, since $q_1,\ldots ,q_t$, $\mu_{2,t+1},\ldots \mu_{2,n}$, $p_1,\ldots ,p_t$, $\nu_{2,t+1},\ldots ,\nu_{2,n}$ belong to $\R_>$, the closed half-space $F_i^+$ is defined by $\tau_{2iL}(x)\le 0$, and the opened half-space $F_i^-$ by $\tau_{2iL}(x)> 0$. Analogously, $D_i^-\equiv \tau_{1iL}(x)\ge 0$, and $D_i^+\equiv\tau_{1iL}(x)< 0$. Again, by Theorem \ref{CharactPropAffMod} and Corollary \ref{GAPS}, the conditions \ref{condicion6_1}, \ref{condicion6_2}, \ref{condicion6_3} and \ref{condicion6_4} hold.

Conversely, let $S'$ be the $\N^n$-semigroup $\cup _{i\in \N} (F_i^+\cap D_i^-)\cap \N^n$ where $F_i^+$ is the closed half-space defined by $\tau_{2iL}(x)\le 0$, and $D_i^-\equiv \tau_{1iL}(x)\ge 0$. By Theorem \ref{CharactPropAffMod}, $S'$ is a proportionally modular semigroup. Conditions \ref{condicion1}, \ref{condicion2}, \ref{condicion3} and \ref{condicion4} imply $S^d=S'^d$. Since $\N^n\setminus S^u\subset \N^n\setminus S'^u$ by conditions \ref{condicion1} and \ref{condicion6_1}, $S'^u \subset S^u$.

Suppose $\alpha\in S^u$.
If $\alpha$ belongs to $(\cup _{i\in \N}\P_{\widetilde{L}}\cap \N^{t}) \times \N^{n-t}$, $\alpha \in S'^u$. In other case, $\alpha$ in $\Pr_{i\widetilde{L}}$ for some $i\in [\phi(\widetilde{L})].$ If $\alpha \in \overline{\Pr}^+_{i\widetilde{L}}\cup  \overline{\Pr}^-_{i\widetilde{L}}\cup \overline{\Pr}^*_{i\widetilde{L}}$, $\alpha \in (F_{i-1}^+\cap D_{i-1}^-)\cup (F_i^+\cap D_i^-)$ (by conditions \ref{condicion6_2}, \ref{condicion6_3} and \ref{condicion6_4}). In the case, $\alpha \in \Pr_{i\widetilde{L}}\setminus (\overline{\Pr}^+_{i\widetilde{L}}\cup  \overline{\Pr}^-_{i\widetilde{L}}\cup \overline{\Pr}^*_{i\widetilde{L}})$, and since $\alpha \in \Pr_{i\widetilde{L}}$, $\sigma _{[t]} (\alpha)\notin S^d$.
So, there exists $\beta \in \Pr_{i\widetilde{L}}\cap (\N^n\setminus S^u)$ with $\sigma _{[t]} (\beta)=\sigma _{[t]} (\alpha)$ such that $\beta+ e_j\in S^u$ and $\pi_{\{t+1,\ldots ,n\}}(\alpha-\beta-e_j)\ge 0$, for some $j\in \{t+1,\ldots ,n\}$, that is, $\alpha=(\alpha-\beta- e_j) + \beta +e_j$ with $ \beta +e_j\in S^u$ but $\beta \notin S^u$; equivalently, $\tau_{1(i-1)L}(\beta)< 0$ and $\tau_{2iL}(\beta)>0$, but $\tau_{1(i-1)L}(\beta +e_j)\ge 0$ and/or $\tau_{2iL}(\beta + e_j)\le 0$. If $\tau_{1(i-1)L}(\beta +e_j)\ge 0$, it is easy to prove that $\tau_{1(i-1)L}(\alpha)\ge 0$. In a similar way, if $\tau_{2iL}(\beta + e_j)\le 0$, $\tau_{2iL}(\alpha)\le 0$. We can conclude $\alpha \in (F_{i-1}^+\cap D_{i-1}^-)\cup (F_i^+\cap D_i^-)\subset S'$.
\end{proof}

Algorithm \ref{algoritmo_NO_todos_intervalos} presents a computational method to check if an $\N^n$-semigroup is a proportionally modular semigroup by testing the conditions given in above theorem. Note that some steps in this algorithm can be computed in a parallel way.

\begin{algorithm}[h]\label{algoritmo_NO_todos_intervalos}
	\KwIn{An $\N^n$-semigroup $S$ with $e_i\notin S$, $\forall i \in [t],$ but $e_i\in S$, $\forall i \in \{t+1,\ldots ,n\},$ given by its set of gaps, and $\Lambda_{S^d}$ the minimal generating set of $S^d$.}
	\KwOut{If $S$ is a proportionally modular semigroup, the values of $(p_1,\ldots , p_t,q_1, \ldots ,q_t,\mu_{1,(t+1)},\mu_{2,t+1},\nu_{1,t+1},\nu_{2,t+1},\ldots ,\mu_{1,n},\mu_{2,n},\nu_{1,n},\nu_{2,n})$ determining the hiperplanes $F_1^+$ and $D_{0}^-$, the empty set in other case.}

\Begin{
    $L\leftarrow \{[p_1,q_1],\ldots, [p_t,q_t]\}$\;
    $M \leftarrow \cup _{i\in  \{t+1,\ldots ,n\}}\{ 1> \mu_{1,i}\ge 0,\, 1>\nu_{1,i}\ge 0,\, 1\ge \mu_{2,i}> 0,\, 1\ge \nu_{2,i}> 0,\, \mu_{1,i}+\mu_{2,i}=1,\, \nu_{1,i}+\nu_{2,i}=1\}$\;
    $\Delta\leftarrow\widetilde{L}_{S_1}\times \cdots \times \widetilde{L}_{S_t}$ (Algorithm \ref{algoritmo_intervalos_max_m_yin})\;

    \While{$\Delta\neq \emptyset$}
        {
        $\widetilde{L}\leftarrow \mbox{First}(\Delta)$\;
        $\{s_1,\ldots ,s_k\}\leftarrow \{s\mid s\in \Lambda_{S^d} \text{ and } \theta_{\widetilde{L}}(s)=0\}$\;

        \If{$\prod _{j\in [k]}\kappa_{\widehat{L}}(s_j)\neq 0$ and $\N^n\setminus S\subset \cup _{i\in [\phi(\widetilde{L})]} \Pr_{i\widetilde{L}}$
        }
            {

            $E\leftarrow \{h_{11L}(x)>0\mid x\in \VSet((\N^t\setminus S^d)\cap \mathbb{H}_{1\widetilde{L}})\}$\;
            $E\leftarrow E\cup \{\{h_{1iL}(x) >0, h_{2(i-1)L}(x)<0\}\mid i\in \{2,\ldots ,\phi(\widetilde{L})\}\mbox{ and } x\in \VSet((\N^t\setminus S^d)\cap \mathbb{H}_{i\widetilde{L}})\}$\;

            $E\leftarrow E\cup \{\{\tau_{1(i-1)L}(\alpha)< 0, \tau_{2iL}(\alpha)>0\}\mid  i\in [\phi(\widetilde{L})] \mbox{ and } \alpha\in \VSet(\Pr_{i\widetilde{L}}\cap \{x\in\N^n\setminus S\mid \sum _{j=t+1}^n x_j\neq 0\})\}$\;

            $F \leftarrow \{\tau_{2iL}(\beta)\le 0 \mid  i\in [\phi(\widetilde{L})] \mbox{ and } \beta \in \overline{\Pr}^+_{i\widetilde{L}}\}$\;

            $F \leftarrow F\cup \{\tau_{1(i-1)L}(\gamma)\ge 0 \mid  i\in [\phi(\widetilde{L})]\mbox{ and } \gamma\in \overline{\Pr}^-_{i\widetilde{L}}\}$\;

            $\Gamma \leftarrow \Cross _{i\in [\phi(\widetilde{L})],\, \delta\in \overline{\Pr}^*_{i\widetilde{L}}} \{\tau_{1(i-1)L}(\delta)\ge 0,\tau_{2iL}(\delta)\le 0 \}$\;

            $\Omega\leftarrow [\kappa_{\widehat{L}}(s_1)]\times \cdots \times [\kappa_{\widehat{L}}(s_k)]$\;

            \While{$\Omega\neq\emptyset$}
                {
                $(m_1,\ldots ,m_k)\leftarrow \mbox{First}(\Omega)$\;

                $F'\leftarrow  F\cup \{\{h_{11L}(s_i/m_i) \le 0, h_{21L}(s_i/m_i)\ge 0\}\mid i\in [k]\} $\;

                $\Gamma'\leftarrow \Gamma$\;

                \While{$\Gamma'\neq \emptyset$}
                    {
                    $F^* \leftarrow \mbox{First}(\Gamma')$\;
                    $T \leftarrow \mbox{ Solve } \big(\cup_{i\in [t]}\{r_{[\widetilde{p}_i,\widetilde{q}_i]}\}\bigcup M
                    \bigcup E \bigcup F' \bigcup F^*\big)$ for $\{p_1,\ldots , p_t,q_1, \ldots ,q_t,\mu_{1,t+1},\mu_{2,t+1},\nu_{1,t+1},\nu_{2,t+1},\ldots ,\mu_{1,n},\mu_{2,n},\nu_{1,n},\nu_{2,n}\}$
                    \;
                    \If{$(p_1,\ldots , p_t,q_1, \ldots ,q_t,\mu_{1,t+1},\mu_{2,t+1},\nu_{1,t+1},\nu_{2,t+1},\ldots ,\mu_{1,n},\mu_{2,n},\nu_{1,n},\nu_{2,n})\in T\cap \R^{2t+4(n-t)}$}
                        {\Return $(p_1,\ldots , p_t,q_1, \ldots ,q_t,\mu_{1,t+1},\mu_{2,t+1},\nu_{1,t+1},\nu_{2,t+1},\ldots ,\mu_{1,n},\mu_{2,n},\nu_{1,n},\nu_{2,n})$}
                    $\Gamma'\leftarrow \Gamma'\setminus \{F^*\}$\;
                    }
                $\Omega\leftarrow \Omega\setminus \{(m_1,\ldots ,m_k)\}$\;
                }
        }
        $\Delta\leftarrow \Delta\setminus \{\widetilde{L}\}$\;
        }
	\Return $\emptyset$\;
}
\caption{Checking if an $\N^n$-semigroup $S$ with $e_i\notin S$, $\forall i \in [t],$ $e_i\in S$, $\forall i \in \{t+1,\ldots ,n\},$ and $S_1,\ldots ,S_t$ proportionally modular numerical semigroups, is a proportionally modular semigroup.}
\end{algorithm}

\begin{example}
Let $S$ be the $\N^3$-semigroup which gap set is the set of black points in Figure \ref{ejemplo3D}, that is,
\[
\begin{multlined}
\{
(0, 1, 0), (0, 2, 0), (0, 2, 1), (0, 5, 0), (1, 0, 0), (1, 2, 0), (1,3, 0), (1, 6, 0), (2, 0, 0), (2, 0, 1),\\ (2, 3, 0), (3, 0, 0), (3, 1, 0), (3, 4, 0), (4, 1, 0)
\}
\end{multlined}
\]
So, the $\N^2$-semigroup $S\cap \langle e_1,e_2\rangle _\R$ is minimally generated by
$$
\{
(0, 3), (0, 4), (1, 1), (2, 1), (4, 0), (5, 0), (5, 2),(6, 0), (7, 0)
\}.
$$
\begin{figure}[h]
  \centering
\includegraphics[scale=.27]{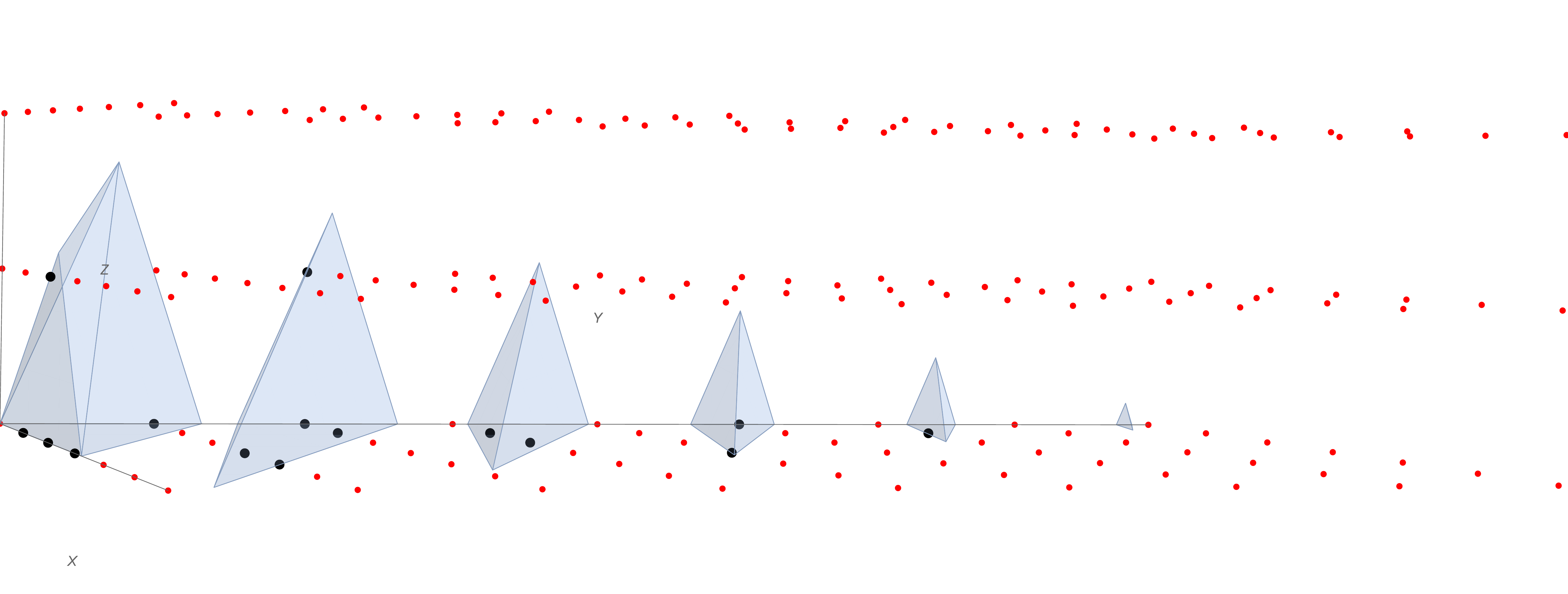}
  \caption{$\N^3$-semigroup given by
  $29x+ 11y + 6z\mod  33\leq  6x+3y+15z$.}\label{ejemplo3D}
\end{figure}
By Algorithm \ref{algoritmo_NO_todos_intervalos}, the semigroup $S$ is a proportionally modular affine semigroup where the intervals $[\frac{829}{256},\frac{113}{16}]$ and $[\frac{21}{16},\frac{1589}{1024}]$ determines $S_1$ and $S_2$ respectively, and $(\mu_{1,3},\mu_{2,3})=(\frac{39}{128},\frac{89}{128})$ and $(-\nu_{1,3},\nu_{2,3})=(-\frac{1}{4},\frac{3}{4})$.

Using our software \cite{prog_prop_mod_afines}, the above results are obtained,
\begin{verbatim}
In[1]:= mgs = {{0, 3}, {0, 4}, {1, 1}, {2, 1}, {4, 0}, {5, 0}, {5, 2},
       {6, 0}, {7, 0}};
In[2]:= gaps = {{0, 1, 0}, {0, 2, 0}, {0, 2, 1}, {0, 5, 0}, {1, 0, 0}, {1, 2, 0},
        {1, 3, 0}, {1, 6, 0}, {2, 0, 0}, {2, 0, 1}, {2, 3, 0}, {3, 0, 0},
        {3, 1, 0}, {3, 4, 0}, {4, 1, 0}};

In[3]:= IsNnProportionallyModularSemigroup[mgs, gaps]

Out[3]= {829/256, 21/16, 113/16, 1589/1024, 39/128, 89/128, 1/4, 3/4}

\end{verbatim}

\end{example}

\subsubsection*{Acknowledgements}
The authors were partially supported by Junta de Andaluc\'{\i}a research group FQM-366.
The first author was supported by Programa Operativo de Empleo Juvenil 2014-2020, Fondo Social Europeo, (GJLIC114).
The second, third and fourth authors were partially supported by the project MTM2017-84890-P (MINECO/FEDER, UE), and the fourth author was partially supported by the project MTM2015-65764-C3-1-P (MINECO/FEDER, UE).


\begin{thebibliography}{20}

\bibitem{Bruns}
\textsc{Bruns, W.; Gubeladze, J.},
\newblock Polytopes, rings, and K-theory,
\newblock {Springer Monographs in Mathematics.} Springer, Dordrecht, 2009.

\bibitem{Deloera}
\textsc{De Loera, J. A.; Malkin, P. N.; Parrilo, P. A.},
\newblock Computation with polynomial equations and inequalities arising in combinatorial optimization,
\newblock In: Lee J., Leyffer S. (eds) Mixed Integer Nonlinear Programming. The IMA Volumes in Mathematics and its Applications, vol 154. Springer, New York, NY, 2012.

\bibitem{GenSemNp}
\textsc{Failla G.; Peterson C.; Utano, R.},
\newblock \emph{Algorithms and basic asymptotics for generalized numerical semigroups in $\N^p$.}
\newblock  Semigroup Forum (2016) 92, 460--473.

\bibitem{ProgFroHiv}
\textsc{Fromentin, J.;  Florent H.},
\newblock \emph{Computing the number of numerical monoid of a given genus.}
\newblock   url{https://github.com/hivert/NumericMonoid}, (2013--2018).

\bibitem{Fromentin}
\textsc{Fromentin, J.;  Florent H.},
\newblock \emph{Exploring the tree of numerical semigroups.}
\newblock   Math. Comp. 85 (2016), no. 301, 2553--2568.

\bibitem{prog_prop_mod_afines}
\textsc{D\'{\i}az-Ram\'{\i}rez, J.D.; Garc\'{\i}a-Garc\'{\i}a, J.I.; S\'{a}nchez-R.-Navarro, A.; Vigneron-Tenorio, A.},
\newblock \emph{PropModSemig.m}
\newblock  \url{http://fqm366.uca.es/propmodsemig-tar}, (2019).

\bibitem{Csemigroup}
\textsc{Garc\'{\i}a-Garc\'{\i}a J.I.; Mar\'{\i}n-Arag\'{o}n D.; Vigneron-Tenorio A.},
\newblock \emph{An extension of Wilf's conjecture to affine semigroups},
\newblock Semigroup Forum (2018), vol. 96, Issue 2, 396--408.

\bibitem{PropAffinSemig}
\textsc{Garc\'{\i}a-Garc\'{\i}a, J.I.; Moreno-Fr\'{\i}as, M.A.; Vigneron-Tenorio, A.},
\newblock \emph{Proportionally modular affine semigroups.}
\newblock   Journal of Algebra and its Applications 17 (2018), no. 1, 1850017, 17 pp.

\bibitem{libro_rosales}
\textsc{Rosales, J. C.; Garc\'{\i}a-S\'{a}nchez, P. A.,}
\newblock Numerical semigroups.
\newblock \emph{Developments in Mathematics, 20. Springer, New York, 2009.}

\bibitem{RoGarGarUrb}
\textsc{Rosales, J. C.; Garc\'{\i}a-S\'{a}nchez, P. A.; Garc\'{\i}a-Garc\'{\i}a, J.I.; Urbano-Blanco, J.M.},
\newblock \emph{Proportionally modular diophantine inequalities.}
\newblock Journal of Number Theory 103 (2003), 281--294.

\bibitem{super}
\textsc{UCA Supercomputer Service.}
\newblock \url{http://supercomputacion.uca.es/}

\bibitem{mathematica}
\textsc{Wolfram Research, Inc.}
\newblock Mathematica, Versi\'{o}n 11.2, Champaign, IL (2017).

\end{thebibliography}
\end{document}